\documentclass{amsart}

\usepackage{amsmath}
\usepackage{hyperref}
\usepackage{amsfonts,graphics,amsthm,amsfonts,amscd,latexsym, amssymb}

\DeclareFontFamily{U}{MnSymbolC}{}
\DeclareSymbolFont{MnSyC}{U}{MnSymbolC}{m}{n}
\DeclareFontShape{U}{MnSymbolC}{m}{n}{
    <-6>  MnSymbolC5
   <6-7>  MnSymbolC6
   <7-8>  MnSymbolC7
   <8-9>  MnSymbolC8
   <9-10> MnSymbolC9
  <10-12> MnSymbolC10
  <12->   MnSymbolC12}{}
\DeclareMathSymbol{\iprod}{\mathbin}{MnSyC}{'270}

\usepackage{epsfig}
\usepackage{flafter}
\usepackage{mathtools}
\usepackage{comment}
\usepackage{stmaryrd}

\usepackage{mathabx,epsfig}

\hypersetup{
    colorlinks=true,    
    linkcolor=blue,          
    citecolor=blue,      
    filecolor=blue,      
    urlcolor=blue           
}

\usepackage{pgfplots}
\pgfplotsset{compat = newest}

\usepackage{tikz}
\usetikzlibrary{graphs,positioning,arrows,shapes.misc,decorations.pathmorphing}

\tikzset{
    >=stealth,
    every picture/.style={thick},
    graphs/every graph/.style={empty nodes},
}

\tikzstyle{vertex}=[
    draw,
    circle,
    fill=black,
    inner sep=1pt,
    minimum width=5pt,
]
\usepackage[position=top]{subfig}
\usepackage{amssymb}
\usepackage{color}

\setlength{\textwidth}{\paperwidth}
\addtolength{\textwidth}{-2in}
\calclayout

\usetikzlibrary{decorations.pathmorphing}
\tikzstyle{printersafe}=[decoration={snake,amplitude=0pt}]

\newcommand{\grad}{\operatorname{grad}}

\newcommand{\del}{\partial}
\newcommand{\divv}{\operatorname{div}}

\newcommand{\QQ}{\mathbb{Q}}

\newcommand{\RR}{\mathbb{R}}

\newcommand{\ReMer}{\mathcal{M}}
\newcommand{\Alg}{\mathcal{A}}

\newcommand{\godd}{\mathfrak{g}}
\newcommand{\vodd}{\mathfrak{v}}

\def\O#1.{\mathcal {O}_{#1}}			
\def\pr #1.{\mathbb P^{#1}}				
\def\af #1.{\mathbb A^{#1}}			
\def\ses#1.#2.#3.{0\to #1\to #2\to #3 \to 0}	
\def\xrar#1.{\xrightarrow{#1}}			
\def\K#1.{K_{#1}}						
\def\bA#1.{\mathbf{A}_{#1}}			
\def\bM#1.{\mathbf{M}_{#1}}				
\def\bL#1.{\mathbf{L}_{#1}}				
\def\bB#1.{\mathbf{B}_{#1}}				
\def\bK#1.{\mathbf{K}_{#1}}			
\def\subs#1.{_{#1}}					
\def\sups#1.{^{#1}}

\usepackage{tikz}
\usetikzlibrary{matrix,arrows,decorations.pathmorphing}

\newtheorem{introdef}{Definition}

  \newtheorem{introthm}{Theorem}

  \newtheorem{theorem}{Theorem}[section]
  \newtheorem{lemma}[theorem]{Lemma}
  \newtheorem{proposition}[theorem]{Proposition}

  \newtheorem{conjecture}[theorem]{Conjecture}

  \newtheorem{definition}[theorem]{Definition}
  \newtheorem{example}[theorem]{Example}

\newtheorem{remark}[theorem]{Remark}

\theoremstyle{remark}

\numberwithin{equation}{section}

\usepackage[all]{xy}

\begin{document}

\title{Odd Metrics}

\author[L.~Braun]{Lukas Braun}
\address{Mathematisches Institut, Albert-Ludwigs-Universit\"at Freiburg, Ernst-Zermelo-Strasse 1, 79104 Freiburg im Breisgau, Germany}
\email{lukas.braun@math.uni-freiburg.de}

\thanks{
LB is supported by the Deutsche Forschungsgemeinschaft (DFG) grant BR 6255/2-1. 
}

\subjclass[2020]{Primary 53C20, 53C22,  53C25, 53B20;
Secondary 32Q15.}
\keywords{Riemannian metric,  real analytic manifold, degenerate metric, cone metric}
\maketitle

\begin{abstract}
We introduce the concept of ODD (`\textbf{O}rthogonally \textbf{D}egenerating on a \textbf{D}ivisor') Riemannian metrics on real analytic manifolds $M$. 
 These semipositive symmetric $2$-tensors may \emph{degenerate} on a finite collection of submanifolds, while their \emph{restrictions} to these submanifolds satisfy the inductive compatibility criterion to be an ODD metric again.

In this first in a series of articles on these metrics, we show that they satisfy basic properties that hold for Riemannian metrics. For example, we introduce orthonormal frames, the lowering and raising of indices, ODD volume forms and the Levi-Civita connection.
We finally show that an ODD metric induces a metric space structure on $M$ and that at least at general points of the \emph{degeneracy locus} $\mathcal{D}$, ODD vector fields are integrable and ODD geodesics exist and are unique.
\end{abstract}

\setcounter{tocdepth}{1} 
\tableofcontents

\section{Introduction}

The concept of Riemannian metrics on manifolds $M$ has seen several generalizations. Apart from the so-called \emph{Finsler metrics}, which are indeed norms on the tangent space $T_pM$, these are given by $2$-tensors with `relaxed properties'. The most important ones among them are probably the \emph{pseudo- (or semi-)Riemannian} and in particular \emph{Lorentz metrics}, which may be negative-definite in some direction but are everywhere nondegenerate (see e.g.~\cite[Section~2]{Lee18} for a discussion).
Another generalization, the \emph{sub-Riemannian} or \emph{Carnot–Carath\'eodory metrics} are defined only on a subbundle $S \subset TM$, see~\cite{Str86}, usually with some compatibility condition ensuring that points can be connected by curves tangent to $S$. All these generalizations have motivations from certain fields, such as physics, control theory, etc..

\subsection*{A motivation from birational geometry: singular K\"ahler metrics} In the present work, we introduce a new (at least to our knowledge) generalization of Riemannian metrics, which we call \emph{ODD metrics}.  Here, ODD stands for `\textbf{O}rthogonally \textbf{D}egenerating on a \textbf{D}ivisor', which stems from the initial motivation for defining these metrics: K\"ahler cone metrics with \emph{cone angle} $\beta > 2\pi$, called \emph{ramifold metrics} in~\cite[Section~1.1]{RT11}.

K\"ahler metrics with cone angle \emph{$\beta < 2\pi$} along a \emph{smooth divisor} $D$ are far better understood and in fact important in the proof of the Yau-Tian-Donaldson conjecture as is evident from the titles of the works~\cite{KE1,KE2,KE3}.

A natural direction in complex birational geometry is to generalize statements about complex manifolds to well-behaved singular varieties, so called \emph{klt pairs} $(X,\Delta)$. K\"ahler-Einstein metrics for example generalize to \emph{singular K\"ahler-Einstein metrics} introduced in~\cite{EGZ09}, where also the existence for $c_1(X,\Delta)\leq 0$ was proven. The equivalence of existence to \emph{log K-polystability} in the case $c_1(X,\Delta)>0$ and thus the singular version of the Yau-Tian-Donaldson conjecture was recently proven in~\cite{LXZ22}. 

While the existence is thus now settled, the \emph{singularities} of these metrics are still not fully understood. The approach goes via solving a complex Monge-Amp\`ere equation on a log-resolution $Y \to X$ with a possibly \emph{degenerate and singular} right hand side \emph{along the exceptional divisor}. This exceptional divisor is not smooth, but at least has simple normal crossings and the zeros and poles in the MA-equation correspond to cone angles that may be greater than $2\pi$. Thus in contrary to the smooth case, in the singular case one has to deal with intersections of the prime divisors \emph{and} with cone angles $>2\pi$.
In addition, further singularity may be introduced where the MA-equation degenerates. 

Such equations and K\"ahler metrics arise very naturally on log-resolutions of klt spaces in more general or entirely different settings (cscK and extremal metrics, curvature bounds and fundamental groups, etc.) and thus a better understanding of their behaviour would be very important.

\subsection*{The differential-geometric approach: ODD metrics}
In the present work, we initiate the study of metrics with \emph{cone angle} $\beta > 2\pi$ from the purely differential-geometric side. In order to do so, it makes sense to generalize the notion a little bit to the following (a slightly simplified version of Definition~\ref{def:ODD-metric}):

\begin{introdef}
Let $M$ be a real analytic manifold of dimension $n$. An \emph{ODD metric} $\godd$ on $M$ is an analytic section of $\mathrm{Sym}^2(T^*M)$, such that:
\begin{enumerate}
    \item $\godd_p$ is positive semidefinite for all $p \in M$.
    \item There is a finite collection $N_1,\ldots,N_m$, $m \in \mathbb{N}$ of closed analytic submanifolds of  $M$ of \emph{strictly smaller dimension}, such that $\godd_p$ is nondegenerate for $p \in M \, \setminus \bigcup_j N_j$.
    \item For each $N_j$, $1\leq j \leq m$, the symmetric bilinear form $\godd_p^{N_j}$ induced by $\godd_p$ on $TN_j$ is again an ODD Riemannian metric. 
\end{enumerate}
\end{introdef}

A few remarks are in order. First, note that we cannot demand the restrictions $\godd_p^{N_j}$ to be classical Riemannian since  in general, they will at least degenerate at intersections with other $N_i$. Second, Definition~\ref{def:ODD-metric} differs from the above in requiring that the $N_j$ have simple normal crossings. While for example this condition is apparently not needed for an induced metric space structure on $M$, we expect it to become important for statements from advanced Riemannian Geometry.
Third, we require analyticity because we will naturally encounter real-meromorphic and algebroid functions. Note that in particular complex manifolds are real-analytic.

\subsection*{ODD vector fields and forms}
We proceed in Section~\ref{sec:oddmetrics} with the definitions of ODD orthonormal frames, vector fields, forms, raising and lowering of indices, etc.. Here, we will give an intuitive account by considering the maybe simplest possible example.

The ODD metric $\godd= x^2 \, \mathrm{dx}^2$ on $M=\RR$ degenerates at the origin. For this metric, starting with the frame $\del_x=\frac{\del}{\del x}$, we obtain an \emph{ODD orthonormal frame} $E$ just by \emph{normalizing $\del_x$ with respect to $\godd$}, which yields
$$
E:=\frac{\del_x}{|\del_x|_\godd}=\frac{1}{\sqrt{x^2}}\del_x=\frac{1}{|x|}\del_x.
$$
This is not even a real-meromorphic, but an \emph{algebroid} vector field. Thus, we define \emph{ODD vector fields} $X$ with respect to $\godd$ to be those algebroid vector fields on $M$ that are of the form $X=f \, E$ with analytic $f$. Consequently, \emph{ODD one-forms} are those that can be written as $\omega= g \varepsilon$ with analytic $g$ in the \emph{ODD conormal frame} $\varepsilon=|x| \, \mathrm{dx}$ defined by $\varepsilon(E)=1$.
In the standard frame, the raising and lowering of indices are thus given by
$$
f \, \del_x \mapsto x^2 f \, \mathrm{dx},
\quad
g \, \mathrm{dx} \mapsto \frac{1}{x^2} g \, \del_x,
$$
for analytic $f$ and $g$, and map ODD vector fields to ODD one-forms and vice versa. We can further define higher differential forms, in particular an \emph{ODD volume form}
$$
dV_\godd=\varepsilon_1 \wedge \ldots \wedge \varepsilon_n =\sqrt{\det(\godd_{ij}} \, \mathrm{dx}_1 \wedge \ldots \wedge \mathrm{dx}_n
$$
for an orthonormal coframe $(\varepsilon_i)_i$, which in our example amounts to $dV_\godd = \varepsilon = |x| \mathrm{dx}$. In particular, we see that compact regular domains will have nonzero volume with respect to this measure. Moreover, we are able to define ODD versions of the gradient, divergence, and Laplacian.

\subsection*{ODD curves and the ODD distance}
In Section~\ref{sec:metricspace}, we first define what \emph{ODD regular curves} should be, namely continuous curves $\gamma \colon [a,b] \to M$, such that for some partition $a<a_1<\ldots<b$, the restrictions $\left.\gamma\right|_{(a_j,a_{j+1})}$ are analytic, and 
$$
        \lim_{t \nearrow a_j} \frac{\dot{\gamma}(t)}{|\dot{\gamma}(t)|_\godd} = \lim_{t \searrow a_j} \frac{\dot{\gamma}(t)}{|\dot{\gamma}(t)|_\godd}.
$$
As we show in Example~\ref{ex:RegularCurveODDvsClass}, an ODD regular curve through the origin in our example case is given by
$$
\gamma \colon [-1,1] \to M=\RR; \quad t \mapsto \mathrm{sgn}(t)\sqrt{|t|}.
$$
It is easy to see in this one-dimensional example, but indeed true in full generality - see Proposition~\ref{prop:ODDcurvesareregular} - that there is always a reparametrization of an ODD regular curve which is a classical regular curve. Here

It is now straightforward to use ODD regular curves to define an \emph{ODD distance function} $d_\godd$ on  $M$ and we can finally prove: 

\begin{introthm}[Theorem~\ref{thm:metricspace}]
Let $(M,\godd)$ be a connected ODD Riemannian manifold and $d_\godd$ its distance function. Then $(M,d_\godd)$ is a metric space and the metric topology associated to $d_\godd$ is the same as the topology of $M$ as a manifold.
\end{introthm}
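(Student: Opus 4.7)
The elementary metric axioms I would dispatch first. Symmetry follows because the time-reversal of an ODD regular curve preserves the partition structure, the unit-tangent matching condition (which merely flips sign on both sides at every partition point), and the ODD length. The identity $d_\godd(p,p)=0$ is witnessed by arbitrarily short analytic arcs based at $p$. For the triangle inequality, given $\varepsilon$-almost minimizing ODD regular curves from $p$ to $q$ and from $q$ to $r$, one concatenates them and, when the two unit tangents at $q$ disagree, interposes an arbitrarily short ODD regular arc rotating one tangent into the other; using Proposition~\ref{prop:ODDcurvesareregular} such corner-smoothing can be performed at arbitrarily small ODD cost, so letting $\varepsilon \to 0$ yields the inequality.

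The two nontrivial assertions — positive definiteness of $d_\godd$ and the coincidence of the metric and manifold topologies — I would derive from a local two-sided comparison between $d_\godd$ and the distance $d_h$ of an auxiliary analytic Riemannian metric $h$ on $M$. On any relatively compact chart $U$ around a fixed $p \in M$, continuity and compactness give $\godd \leq C h$ pointwise, hence $L_\godd(\gamma) \leq \sqrt{C}\, L_h(\gamma)$ along every curve, and therefore $d_\godd \leq \sqrt{C}\, d_h$ on $U$. Consequently every $d_\godd$-ball around $p$ contains an $h$-ball, so the metric topology is already coarser than the manifold topology, and this is half of the desired topological identification.

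The main obstacle is the reverse bound. What I would aim for is a Łojasiewicz-type estimate $d_\godd(p, q) \geq c\, d_h(p, q)^\alpha$ valid locally, with exponent $\alpha \geq 1$ and constant $c > 0$ determined by the order of vanishing of the analytic density $\sqrt{\det \godd_{ij}}$ along the degeneracy locus $\mathcal{D}$; the toy example $\godd = x^2\, dx^2$ already realizes $\alpha = 2$ with $d_\godd(0, q) = q^2/2$. Off $\mathcal{D}$ the bound reduces to the classical Riemannian comparison with $\alpha = 1$, and the real work is at a general $p \in \mathcal{D}$. I would induct on $\dim M$: since by hypothesis $\godd^{N_j}$ is ODD on the strictly lower-dimensional $N_j$, the inductive estimate controls portions of curves lying along $\mathcal{D}$, using that the projection of an ambient curve to $N_j$ has $\godd^{N_j}$-length no greater than its $\godd$-length; and a one-variable Łojasiewicz argument applied to the analytic quadratic form $\godd_{ij} v^i v^j$ in the directions transverse to $N_j$ controls the off-$\mathcal{D}$ excursions. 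The simple-normal-crossings hypothesis in Definition~\ref{def:ODD-metric} allows the ODD orthonormal frame from Section~\ref{sec:oddmetrics} to be brought into a product normal form at intersections of the $N_j$, so the transverse estimates decouple. The delicate step I anticipate is ruling out shortcuts produced by ODD regular curves that oscillate between $\mathcal{D}$ and its complement on very small scales; the unit-tangent matching condition, which forces continuity of the normalized velocity even at partition points where the curve crosses $\mathcal{D}$, should prevent arbitrarily many such oscillations from accumulating.

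Once both comparisons are in hand, positive definiteness is immediate — for $q \ne p$, picking any $r$ with $0 < r < d_h(p, q)$ and applying the Łojasiewicz bound gives $d_\godd(p, q) \geq c r^\alpha > 0$ — and the topological identification follows since every manifold-open neighborhood of $p$ contains a $d_\godd$-ball (by the Łojasiewicz bound) while every $d_\godd$-ball around $p$ contains a manifold-neighborhood (by the upper comparison). Both local statements globalize on $M$.
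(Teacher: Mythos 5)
Your overall architecture --- a local upper comparison $d_\godd \le \sqrt{C}\, d_h$, a local lower bound, and then the standard argument of Lee's Theorem 2.55 --- is the same as the paper's, and the upper comparison is exactly Lemma~\ref{le:comparewithRiemMetric}(1) (compactness plus the length of a coordinate line segment). The gap is in the lower bound, which is the entire content of the theorem. You propose a quantitative \L{}ojasiewicz estimate $d_\godd(p,q)\ge c\, d_h(p,q)^\alpha$, but the steps you list do not add up to a proof. First, a pointwise \L{}ojasiewicz inequality for the quadratic form $\godd_{ij}v^iv^j$ does not by itself bound curve lengths from below: the infimum defining $d_\godd$ is over ODD piecewise regular curves that are free to travel in the nearly degenerate directions and to hug $\mathcal{D}$, which is precisely where a cheap shortcut could hide. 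Second, the assertion that ``the projection of an ambient curve to $N_j$ has $\godd^{N_j}$-length no greater than its $\godd$-length'' is unjustified: you do not say which projection, Definition~\ref{def:ODD-metric} does not require the degenerate directions to be $\godd$-orthogonal to $TN_j$, and even for the coordinate projection in SNC coordinates the metric coefficients at the projected point differ from those at the original point, so no length monotonicity is automatic. Third, you explicitly defer the oscillation problem, which is exactly where the difficulty of the lower bound sits. So the central estimate remains a program rather than a proof.

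The paper sidesteps all of this by proving something strictly weaker that still suffices: Lemma~\ref{le:comparewithRiemMetric}(2) only asserts a uniform constant $D>0$ with $d_\godd(p,q)\ge D$ for all $q$ outside a fixed neighbourhood $V$ of $p$. The argument is a compactness and slab argument in SNC coordinates: any ODD piecewise regular curve leaving a small coordinate box $K'$ around $p$ must, for some $j$, traverse the compact slab $K_j'=K'\cap\{|x_j|\ge \epsilon/4\}$ while moving $|x_j|$ from $\epsilon/4$ to $\epsilon/2$, and on $K_j'$ the metric does not degenerate in the $x_j$-direction, so each such traversal has $\godd$-length bounded below by some $D_j>0$; take $D=\min_j D_j$. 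Together with the upper comparison this gives positivity of $d_\godd$ and the identification of the two topologies exactly as in Lee's Theorem 2.55. Your H\"older comparison, if true, would be a genuinely stronger and independently interesting statement, but it needs substantially more work than you indicate. A minor further point: the triangle inequality requires no corner-smoothing, since the definition of an ODD \emph{piecewise} regular curve imposes no tangent-matching at the partition points of the piecewise structure, so concatenation of almost-minimizing curves is already admissible.
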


\subsection*{Integrability of ODD vector fields and existence of ODD geodesics}

In the last section of the present work, we first define the \emph{ODD Levi-Civita connection}, Christoffel symbols etc. in analogy to the classical case. In our running example we compute $\Gamma_{11}^1=\frac{1}{x}$. In particular, $\nabla_E E = 0$ but $\nabla_{\del_x} \del_x = \frac{1}{x} \del_x$ is not even an ODD analytic vector field.
We define ODD vector fields along curves, where we have to be careful in the case of curves inside the degeneracy locus. We see that at least a covariant derivative $D_t$ along an ODD curve always gives a well-defined vector field along the curve.

Then we turn to investigate integrability of \emph{ODD vector fields}. We first show in Example~\ref{ex:nonuniqueintegralcurves}, that integral curves through a point $p \in M$ of an ODD vector field $X$ may be \emph{nonunique}. However, we prove the following:

\begin{introthm}[Theorem~\ref{thm:integrability-odd-vf}]
Let $(M,\godd)$ be an ODD Riemannian manifold and $p \in M$. Let $X$ be an ODD vector field. If either
\begin{enumerate}
    \item $p$ is a general point of $M$, or
    \item $p$ is a general point of some $N_j \subseteq \mathcal{D}$, which is maximal in the sense that it is not contained in any other $N_k$,
\end{enumerate}
then, there exists $ \epsilon>0$ and a unique ODD integral curve $\gamma\colon (-\epsilon,\epsilon) \to M$ with $p(0)=p$.
\end{introthm}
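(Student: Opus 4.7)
The plan is to reduce both cases to the classical Cauchy-Lipschitz theorem: case~(1) is immediate, while case~(2) uses the reparametrization provided by Proposition~\ref{prop:ODDcurvesareregular} to absorb the algebroid degeneration of $\godd$ along $N_j$.

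For case~(1), a general point $p$ lies outside $\mathcal{D}$, so $\godd$ restricts to a nondegenerate analytic Riemannian metric on some open neighbourhood $U$ of $p$. An ODD orthonormal frame on $U$ is therefore a classical analytic frame, every ODD vector field $X = \sum f_i E_i$ is an analytic vector field in the ordinary sense, and the classical Picard-Lindelöf theorem produces a unique analytic integral curve $\gamma\colon(-\epsilon,\epsilon) \to U$ with $\gamma(0)=p$ and $\dot\gamma = X\circ\gamma$.

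For case~(2), fix $p$ generic in a maximal stratum $N_j$. By maximality of $N_j$ and the simple normal crossings hypothesis on $\mathcal{D}$, we may shrink to a neighbourhood $U$ of $p$ with $U \cap \mathcal{D} = U \cap N_j$, and pick analytic coordinates $(x_1,\ldots,x_n)$ on $U$ with $N_j \cap U = \{x_1=0\}$. By the local structure of ODD orthonormal frames developed in Section~\ref{sec:oddmetrics}, we may exhibit an ODD orthonormal frame $(E_1,\ldots,E_n)$ on $U$ in which $E_1$ is algebroid, carrying a factor $\phi(x)^{-1}$ whose singularity is concentrated along $\{x_1=0\}$, while $E_2,\ldots,E_n$ are analytic on $U$ and tangent to $N_j$. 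Writing $X=\sum_i f_i E_i$ with analytic $f_i$, the defining equation $\dot\gamma = X\circ\gamma$ becomes a first-order system whose right-hand side has an algebroid pole along $\{x_1=0\}$, so that classical ODE theory does not apply directly.

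The crucial step is to apply Proposition~\ref{prop:ODDcurvesareregular}: every ODD regular curve admits a reparametrization $t=t(s)$ after which it becomes a classical analytic regular curve. Substituting this change of variable into the system cancels the factor inherited from $E_1$ and yields an equivalent regularized system $\tilde\gamma'(s) = \tilde X(\tilde\gamma(s))$ whose right-hand side $\tilde X$ is analytic, and in particular locally Lipschitz, on a neighbourhood of $p$. Classical Picard-Lindelöf then produces a unique analytic solution $\tilde\gamma$, and reverting the reparametrization yields the desired ODD integral curve on $(-\epsilon,\epsilon)$ with $\gamma(0)=p$. The main subtlety is uniqueness, in view of Example~\ref{ex:nonuniqueintegralcurves} where distinct ODD integral curves can pass through non-generic points of $\mathcal{D}$: the generality hypothesis on $p$ is precisely what forces $\tilde X$ to be analytic, rather than merely continuous or Hölder. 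Any other ODD integral curve through $p$ is by definition an ODD regular curve and, after the same reparametrization, solves the same regular ODE; classical uniqueness therefore lifts to uniqueness at the ODD level, completing the proof.
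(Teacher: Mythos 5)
Your overall strategy --- regularize the singular ODE by a change of time variable that absorbs the pole of the orthonormal frame along $N_j$, solve the regularized system by Picard--Lindel\"of, and transform back --- is the same idea that drives the paper's proof, and case (1) is handled identically. But the way you justify the crucial regularization step does not work as stated. You invoke Proposition~\ref{prop:ODDcurvesareregular} to supply the reparametrization $t=t(s)$; that proposition, however, takes an \emph{already constructed} ODD regular curve and reparametrizes it to a classical regular curve. At the point in your argument where you ``substitute this change of variable into the system,'' no integral curve exists yet, so there is nothing to reparametrize and no candidate for $t(s)$: the argument is circular. What is actually needed --- and what the paper does --- is to regularize the \emph{vector field} rather than the curve: multiply the right-hand side $X^j E^i_j$ by a common denominator $h$ of the frame coefficients (an analytic function vanishing on $N_j$), solve the resulting analytic autonomous system for $y$, and only then build the time change by integrating $dt/dk = h(y(k))$ and inverting the monotone function $t(k)$. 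Two verifications are essential here and absent from your proposal: (i) that the solution $y$ of the regularized system does not remain inside $N_j$ --- the paper rules this out because $X$ tangent to $N_j$ with a pole would make $\godd(X,X)$ non-analytic along $y$ --- without which $h\circ y\equiv 0$ and the time change collapses; and (ii) that $t(k)$ is strictly monotone with a continuous inverse (of infinite slope at $0$), which is what makes $x(t)=y(k(t))$ a genuine solution of the original system.

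Two smaller points. First, you place yourself in coordinates with $N_j\cap U=\{x_1=0\}$, i.e.\ you implicitly assume $N_j$ has codimension one; Definition~\ref{def:ODD-metric} allows $N_j=\{x_1=\cdots=x_r=0\}$ for any $r\le n$, in which case several frame members $E_1,\dots,E_r$ may be singular rather than a single $E_1$. The argument survives, but as written it does not cover the general case. Second, your uniqueness argument (``after the same reparametrization, solves the same regular ODE'') presupposes that every competing ODD integral curve can be reparametrized into a solution of the \emph{same} regularized system; this is true, but it has to be arranged through the common denominator $h$, not through Proposition~\ref{prop:ODDcurvesareregular}.
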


We also conjecture (Conj.~\ref{conj:int-vf}) that the \emph{existence} of integral curves holds for every $p \in M$ and \emph{even uniqueness holds} in the sense that if $\gamma$ and $\mu$ are two integral curves through $p$, then 
$$
\lim_{t \to 0} \frac{\dot{\gamma}(t)}{|\dot{\gamma}(t)|_\godd} \neq \lim_{t \to 0} \frac{\dot{\mu}(t)}{|\dot{\mu}(t)|_\godd}.
$$
Moreover, we sketch a possible proof of this conjecture in the follow-up of Section~\ref{sec:int-vf}. Along very similar lines we can prove the existence of \emph{ODD geodesics}, i.e. ODD curves with $D_t \dot{\gamma} =0$, at general points of the degeneracy locus:

\begin{introthm}[Theorem~\ref{thm:existence-geodesics}]
Let $(M,\godd)$ be an ODD Riemannian manifold and $p \in M$. Let $v \in T_pM$ be a tangent vector at $p$. If either
\begin{enumerate}
    \item $p$ is a general point of $M$, or
    \item $p$ is a general point of some $N_j \subseteq \mathcal{D}$, which is maximal in the sense that it is not contained in any other $N_k$,
\end{enumerate}
then, there exists a unique ODD geodesic $\gamma$ through $p$ in direction $v$.
\end{introthm}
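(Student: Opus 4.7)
The plan is to follow the strategy of Theorem~\ref{thm:integrability-odd-vf} closely, converting the second-order equation $D_t \dot{\gamma} = 0$ into a first-order analytic system expressed in an ODD orthonormal frame rather than in the standard coordinate frame. Case (1) is immediate: at a general point $p \in M$, the metric $\godd$ is a genuine analytic Riemannian metric in a neighborhood of $p$, and the classical existence and uniqueness theorem for geodesics applies.

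For case (2), I would pick analytic coordinates $(x_1,\ldots,x_k,y_1,\ldots,y_{n-k})$ centered at $p$ with $N_j = \{x_1 = \cdots = x_k = 0\}$ locally. Maximality of $N_j$ together with genericity of $p$ in $N_j$ guarantees that no other $N_i$ passes through $p$, so on a neighborhood the degeneracy locus is exactly $N_j$. After writing down an explicit ODD orthonormal frame $(E_\alpha)$ in these coordinates and expanding $\dot{\gamma}(t) = \sum_\alpha c^\alpha(t)\, E_\alpha(\gamma(t))$, the geodesic equation becomes the first-order system
\[
\dot{\gamma}(t) = \sum_\alpha c^\alpha(t)\, E_\alpha(\gamma(t)), \qquad \dot{c}^\alpha(t) + \sum_{\beta,\gamma} \Omega^\alpha_{\beta\gamma}(\gamma(t))\, c^\beta(t)\, c^\gamma(t) = 0,
\]
where the $\Omega^\alpha_{\beta\gamma}$ are the connection coefficients of $\nabla$ in the frame $(E_\alpha)$. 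The decisive point---already visible in the running example, where $\Gamma^1_{11} = 1/x$ blows up but $\nabla_E E = 0$---is that the $\Omega^\alpha_{\beta\gamma}$ extend analytically across $\mathcal{D}$. With this in hand, standard analytic ODE theory yields a unique analytic solution for initial data $(\gamma(0), c(0)) = (p, v)$ expressed in the ODD frame.

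The two natural subcases---$v$ tangent to $N_j$, and $v$ with a nonzero component transverse to $N_j$---are both handled by this system. In the tangent case, the orthogonal-degeneration condition together with the symmetry of $\nabla$ forces any candidate solution to remain inside $N_j$, where it agrees with the geodesic of the induced ODD metric $\godd^{N_j}$, whose existence and uniqueness follow by induction on dimension. In the transverse case, the geodesic leaves $\mathcal{D}$ for $t \neq 0$ and the analytic solution automatically satisfies the ODD regularity matching condition of Proposition~\ref{prop:ODDcurvesareregular} at $t = 0$, because the $c^\alpha(t)$ are analytic across $t = 0$.

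The main obstacle will be verifying that the connection coefficients $\Omega^\alpha_{\beta\gamma}$ are genuinely analytic at points of $\mathcal{D}$ when written in the ODD frame, and, dually, that the map from a classical initial velocity $v \in T_p M$ to the ODD-frame coefficients $c^\alpha(0)$ is well-defined and compatible with the matching condition at $t = 0$. This is precisely what the maximality hypothesis on $N_j$ secures: near an intersection of several $N_i$, additional degenerations would break the clean normal form of the frame and reintroduce genuine singularities into the $\Omega^\alpha_{\beta\gamma}$, which is why the theorem is restricted to general points of maximal $N_j$ and the full statement appears only in conjectural form in the spirit of Conjecture~\ref{conj:int-vf}.
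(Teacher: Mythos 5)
Your case (1) and the overall reduction to a first-order system are fine, but the core of case (2) rests on two claims that are not established and that sidestep the actual difficulty. First, you assert that the connection coefficients $\Omega^\alpha_{\beta\gamma}$ in the ODD orthonormal frame extend analytically across $\mathcal{D}$, citing $\nabla_E E=0$ in the one-dimensional example. The paper never claims this; in an orthonormal frame the coefficients are $\frac{1}{2}(c^k_{ij}-c^j_{ik}-c^i_{jk})$ where $[E_i,E_j]=c^k_{ij}E_k$, and since the $E_i$ have poles along $\mathcal{D}$ and one must differentiate and then re-expand in the same singular frame, there is no reason for the $c^k_{ij}$ to be analytic in general. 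One cannot extrapolate from a one-dimensional example in which there are no brackets at all. Second, and more decisively: even if the $\Omega^\alpha_{\beta\gamma}$ were analytic, your position equation $\dot{\gamma}^i(t)=c^\alpha(t)E^i_\alpha(\gamma(t))$ has an algebroid right-hand side with poles along $N_j$, and the initial point $p$ lies on $N_j$. So the coupled system in the variables $(\gamma^i,c^\alpha)$ is \emph{not} an analytic ODE system near the initial data, and ``standard analytic ODE theory'' does not apply. This is precisely the obstruction that the proofs of Theorem~\ref{thm:integrability-odd-vf} and Theorem~\ref{thm:existence-geodesics} are built to overcome, and your proposal omits the mechanism entirely.

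The paper's route is different and fills exactly this hole: it writes the geodesic equations in the \emph{coordinate} frame (accepting meromorphic Christoffel symbols), converts to a first-order system, multiplies the right-hand side by a common denominator $h$ vanishing on $N_j$ so that the modified system becomes analytic and Picard--Lindel\"of applies, rules out the degenerate case where the modified solution stays inside $N_j$, and then undoes the time change by solving $\frac{dk}{dt}=\frac{1}{f(k)}$ with $f(k)=h(y(k))$, whose inverse is obtained by integrating $f$ and inverting the resulting monotone function. The singular reparametrization $k(t)$, with infinite slope at $0$, is what converts the analytic solution of the modified system into an ODD regular curve solving the original singular system. Your tangent-versus-transverse case split and the remark about inducting on $\godd^{N_j}$ are reasonable observations, but without the denominator-clearing and reparametrization step the proof does not go through at points of $\mathcal{D}$.
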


We finally conjecture (Conj.~\ref{conj:existence-geodesics}) that unique geodesics exist for all points $p \in M$ and tangent directions $v \in T_pM$, where we expect a possible proof to look similar to the sketch of the proof of Conjecture~\ref{conj:int-vf}.

\subsection*{Further directions}

Since the ultimate application of ODD metrics we have in mind is in K\"ahler geometry of klt spaces, the following directions will be adressed in further works on ODD metrics:

\begin{enumerate}
    \item Give rigorous proofs of Conjectures~\ref{conj:int-vf} and~\ref{conj:existence-geodesics} and develop large parts of classical Riemannian geometry for ODD metrics, as far as e.g. the \emph{Bishop-Gromov volume comparison} or the \emph{Margulis Lemma}.
    \item Carry over the concept of ODD metrics from manifolds to \emph{orbifolds}. An envisioned ODD orbifold metric should be given in orbifold charts by ODD Riemannian metrics compatible with each other \emph{and} the orbifold structure. In particular, the degeneracy locus of the metric and the codimension-one ramification locus of the orbifold can agree, which yields \emph{arbitrary cone angles} $\beta \in \QQ_{>0}$.
    \item Finally, show that solutions to complex Monge-Amp\`ere equations with degenerate and singular right hand side are - maybe at least under additional assumptions -  of ODD type. 
\end{enumerate}

\section{ODD Riemannian Metrics}
\label{sec:oddmetrics}

Let $M$ be a  $C^\omega$ (i.e. real-analytic) manifold. The necessity of analyticity stems from the simple fact that we can define real-meromorphic (and more generally \emph{algebroid}) functions on $M$. We note here that if a Riemannian metric $g$ on a \emph{smooth} manifold is $C^\omega$ in every chart, then the manifold is already real-analytic due to the work~\cite{DK81}. Moreover, in the same article it is shown that there exists an atlas for which $g$ is real-analytic if and only if $g$ is real analytic in \emph{harmonic coordinates}.

For an open subset $U \subseteq M$, we denote by $C^\omega(U)$ the real-analytic functions $f \colon U \to \RR$. By $\ReMer(U)$, we denote the `real-meromorphic functions' - the field of fractions of the domain $C^\omega(U)$. Finally, by $\Alg(U)=\ReMer(U)^{\mathrm{alg}}$, we denote the \emph{algebroid functions} on $U$, the algebraic closure of the field of meromorphic functions.

\begin{remark}
\label{rem:algebroid}
{\em
An algebroid function may of course be multivalued and some values may be complex. In the following, we will be particularly interested in real branches of algebroid functions. For example, if $U=M=\RR$ and $f$ is a strictly positive analytic function, then as usual, by $\sqrt{f}$, we mean the positive branch of the algebroid function solving $T^2-f$, which is in fact again analytic. 
Another example is the real absolute value function $x \mapsto |x|$, which consists of the positive branches of the solution to $T^2-x^2$. Here, since $x^2$ is only semipositive, we have a branch point and thus four different continuous solutions $x,-x,|x|,$ and $-|x|$, of which only two are analytic.

One can construct algebroid functions in such a way that e.g. $\sqrt{f}$ and $-\sqrt{f}$ will be distinct functions, see~\cite[Sec.~1.1]{Er82}.
}
\end{remark}

We first give the definition of an ODD Riemannian metric, which is a special form of a \emph{semipositive} tensor.

\begin{definition}
\label{def:ODD-metric}
Let $M$ be a real analytic manifold of dimension $n$. An \emph{ODD Riemannian metric} $\godd$ on $M$ is an analytic section of $\mathrm{Sym}^2(T^*M)$, such that for the induced symmetric bilinear form $\godd_p \colon T_pM \times T_p M \to \mathbb{R}$ defined on each tangent space at $p \in M$, the following hold:
\begin{enumerate}
    \item $\godd_p$ is positive semidefinite for all $p \in M$.
    \item There is a finite collection $N_1,\ldots,N_m$, $m \in \mathbb{N}$ of closed analytic submanifolds of  $M$ of \emph{strictly smaller dimension}, such that $\godd_p$ is nondegenerate for $p \in M \setminus \bigcup N_j$.
    \item For each $N_j$, $1\leq j \leq m$, the symmetric bilinear form $\godd_p^{N_j}$ induced by $\godd_p$ on $TN_j$ is again an ODD Riemannian metric. 
    \item The collection $N_1,\ldots,N_m$ is \emph{simple normal crossing}, i.e., around every point $m \in M$, there exists a chart $(x^i)_{i=1,\ldots,n}$, such that every $N_j$ is given by the vanishing of some of the coordinates:
    $$
    N_j=\{x^{i_{j1}}=0,\ldots,x^{i_{jn_j}}=0  \}.
    $$
\end{enumerate}
We call the pair $(M,\godd)$ an ODD Riemannian manifold. We denote by $\mathcal{D}:=\bigcup_{j=1}^m N_j$ the \emph{degeneracy locus}.
\end{definition}

\begin{remark}
{\em
We note that the above definition has an inductive nature.

If $M$ is a curve, then the definition boils down to say that $\godd$ may degenerate on a finite set of points. In general, we may call a point \emph{general} (with respect to the ODD metric), if $\godd$ is nondegenerate at $p$, and \emph{special}, if $\godd$ is degenerate at $p$. Then the set of general points is a dense open submanifold of $M$, and the set of special points $\mathcal{D}$ is a finite union of ODD Riemannian manifolds of strictly smaller dimension. We note that, as the following example shows, for $N_j \neq N_i$ such that the nontrivial intersection $O_{ij}=N_j\cap N_i$ is a submanifold of $N_j$ of strictly smaller dimension, it may or may not be contained in the set of special points of $N_j$.
}
\end{remark}

\begin{example}
Let $M:=\mathbb{R}^3$ and $\godd$ be defined by
$$
\godd(x,y,z):= \mathrm{dx}\odot\mathrm{dx} + \mathrm{dy}\odot\mathrm{dy} + (x^2+z^2)(y^2+z^2)\mathrm{dz}\odot\mathrm{dz}.
$$
Then $\godd$ is an ODD Riemannian metric. The set of special points is the union of the $y$- and the $x$-axis. It is clear that on both axes, $\godd$ induces a veritable Riemannian metric.

If instead we consider $\mathfrak{h}$, defined by
$$
\mathfrak{h}(x,y,z):= (x^2+z^2)\mathrm{dx}\odot\mathrm{dx} + \mathrm{dy}\odot\mathrm{dy} + (x^2+z^2)(y^2+z^2)\mathrm{dz}\odot\mathrm{dz},
$$
then $\mathfrak{h}$ again is an ODD metric with the same set of special points, but now the origin is a special point for the induced ODD metric on the $x$-axis, while on the $y$-axis, the induced metric is again veritable Riemannian. Note that the dimension of the degenerate subspace of $TM$ is two along the $y$-axis and one along the $x$-axis.
\end{example}

\subsection{ODD Orthonormal Frames}

It is clear that a priori, if for an ODD metric we try to produce an orthonormal frame from an arbitrary one by the Gram-Schmidt orthonormalization process, we will run into trouble as soon as some vector field lies in the degeneracy locus at some $N_j$. At a general point of $N_j^m$ not lying in any other $N_k$, starting with a coordinate frame $(\del_i=\frac{\del}{\del x^i})_{i=1,\ldots,n}$ for slice coordinates $(x^i)_{i=1\ldots,n}$, we could at least find a frame  $(X_i)_{i=1,\ldots,n}$, such that $(\left.X_i\right|_{N_j})_{i=1,\ldots,m}$ is an orthonormal frame on $N_j$, and $(\left.X_i\right|_{N_j})_{i=m+1,\ldots,n}$ span the degenerate subspace of $T_pM$ at every point $p \in N_j$.

But as soon as we encounter more special points, the frames produced in this way get less and less meaningful. Indeed, if $\godd$ is totally degenerate at a point $p$, this procedure will leave any input frame around $p$ as it is.  

Instead, in order to define orthonormal frames appropriately, we have to consider \emph{algebroid vector fields}. That is, sections of the sheaf $\Alg(TM) \supseteq C^\omega(TM)$, which is a sheaf of $\Alg(M)$-vector spaces. 

Starting with an arbitrary $C^\omega$ local coordinate frame $(\del_i=\frac{\del}{\del x^i})_{i=1,\ldots,n}$ defined on $U \subseteq M$, we can perform the steps in the Gram-Schmidt orthonormalization process as usual, where the $i$-th vector field 
$$
E_i:=\frac{\del_i-\sum_{j=1}^{i-1} \frac{\godd_{ij}}{\godd_{jj}}\del_j}{\left| \del_i-\sum_{j=1}^{i-1} \frac{\godd_{ij}}{\godd_{jj}}\del_j\right|_\godd}
$$
of the resulting orthonormal frame $(E_i)_{i=1,\ldots,n}$ is a well-defined algebroid vector field (compare Remark~\ref{rem:algebroid}), where $\godd_{ij}$ are the matrix coefficients of $\godd$ in the frame $(\del_i)$. In particular, such a vector field may have poles at $p \in U$, so there might not be an orthonormal basis $(\left.E_1\right|_p,\ldots, \left.E_n\right|_p)$ of $T_pM$ in general, but the identities 
$$
\godd(E_i,E_j) = \delta_{ij}
$$
hold in $\Gamma(U,\Alg(TM))$. This brings us straight to the definition.

\begin{definition}
Let $(M,\godd)$ be an ODD Riemannian manifold. Then an ODD orthonormal frame w.r.t. $\godd$ on $U \subseteq M$ is a collection $(E_i)_{i=1,\ldots,n}$ of sections of $\Alg(TU)$, such that
$$
\godd(E_i,E_j) = \delta_{ij}
$$
holds for all $1\leq i,j\leq n$. 
\end{definition}

The above considerations show that an ODD orthonormal frame always exists on any chart $U \subseteq M$. We also note that the condition $\godd(E_i,E_j) = \delta_{ij}$ automatically implies that the only poles of the $E_i$ may lie in the degenerate subspaces of $\godd$ and are of corresponding order.

We may similarly define ODD conormal frames. Expressed in a real-analytic coframe, these of course possess zeros instead of poles. Expressed in an ODD conormal frame, an ODD metric is (as usual) given by the identity matrix.
One can check that the following definition does not depend on the chosen ODD orthonormal frames, so it is well-defined:

\begin{definition}
\label{def:ODD-vectorfield}
Let $(M,\godd)$ be an ODD Riemannian manifold and $X$ ($\omega$) be an algebroid vector field (one-form). We say that $X$ ($\omega$) is an \emph{ODD vector field (one-form)}, if in any orthonormal frame $(E_i)_i$ (conormal frame $(\varepsilon^i)_i$), the coefficients $X^i$ ($\omega_i$) are real-analytic functions.
\end{definition}

We define analogously arbitrary ODD tensors.

\subsection{ODD Musical Isomorphisms and the Gradient}

As in the case of ordinary Riemannian metrics, we wish to define the lowering and raising of indices. First we note that we get a well-defined though in general not injective bundle homomorphism $\hat{\godd}\colon TM \to T^*M$ given by
$$
\hat{\godd}(v)(w):=\godd_p(v,w)
$$
for all $p \in M$ and $v,w \in T_pM$. Thus we may safely define the \emph{lowering of indices} $X  \mapsto X^\flat$ at least for real-analytic vector fields $X \in C^\omega(TM)$. However, if we extend the domain to algebroid sections of $TM$, in the image we encounter algebroid one-forms, elements of $\Alg(T^*M)$. 
In a local real-analytic frame over $U \subseteq M$, the coefficient matrix $\godd_{ij}$ is invertible over the field $\ReMer(U) \subseteq \Alg(U)$, as follows from the Gram-Schmidt-orthogonalization process. We denote the inverse matrix by $\godd^{ij}$ as usual, it has meromorphic entries. Of course, in ODD orthonormal (co-)frames, $\godd^{ij}$ will be the identity matrix as well. 

Now we can define the \emph{raising of indices} of a one-form $\omega$ given in a local coframe $(\epsilon^i)$ dual to the frame $(E_i)$ by $\omega=\omega_j \epsilon^j$ setting
$$
\omega^\sharp := \godd^{ij} \omega_j E_i.
$$
The raising and lowering of indices define inverse isomorphisms between the $\Alg(M)$-vector spaces of algebroid vector fields and one-forms. 

As in the classical theory, we use the raising of indices to define the \emph{gradient} of an analytic (or algebroid) function $f \colon M \to \RR $ by setting 
$$
\grad f := (df)^\sharp,
$$
which is in a local frame $(E_i)$ expressed as
$$
\grad f = (\godd^{ij} E_i f) E_j.
$$
We note that while - again as in the classical theory - in a local ODD orthonormal frame, the components of $\grad f$ are the same as those of $df$, when $df$ \emph{vanishes at some point in some real-analytic frame}, it may happen that it does not so in an ODD frame, and in particular, it may have a well defined gradient vector field. 

We extend the flat and sharp operators to tensors of arbitrary rank as usual.
Moreover, we can define an `ODD inner product' for covectors, which as one might guess, instead of being degenerate, will have poles. In terms of one-forms and in a usual local frame, it is given as
$$
\langle \omega, \eta \rangle := \godd(\omega^\sharp, \eta^\sharp ) = \godd^{ij} \omega_i \eta_j.
$$
We extend this to an `ODD inner product' for tensors of arbitrary rank.

\subsection{ODD Volume Forms}

We always assume $M$ to be oriented. We can directly adopt the definition of the ODD volume form associated to an ODD metric.

\begin{definition}
Let $(M,\godd)$ be an ODD Riemannian manifold. There is a unique algebroid $n$-form $dV_\godd$, the \emph{ODD volume form}, given by one of the following equivalent properties:
\begin{enumerate}
    \item In any local ODD orthonormal oriented coframe $(\epsilon_i)$, the ODD volume form is given by $$dV_\godd=\epsilon_1 \wedge \ldots \wedge \epsilon_n.$$
    \item For a local oriented chart $(x^i)$, the ODD volume form is given by
    $$
    dV_\godd= \sqrt{\det(\godd_{ij})} dx_1 \wedge \ldots \wedge dx_n.
    $$
\end{enumerate}
\end{definition}

Using the ODD volume form, we may define integrals of functions and in particular define the volume of compact regular domains $D \subseteq M$ by
$$
\mathrm{Vol}(M):= \int_D 1 \, dV_\godd.
$$
In particular, we see that since $dV_\godd$ vanishes only on a subset of codimension greater or equal to one, no compact regular domain will have volume zero, i.e. computing volumes with $dV_\godd$ \emph{makes sense}.  

\subsection{Divergence and Laplacian}

Similarly to the gradient, we can define the important differential operators \emph{divergence} and \emph{Laplacian}. For a vector field $X$ and a $p$-form $\omega$, we denote by $X \iprod \omega$ the \emph{interior product} (or contraction), a $(p-1)$-form given by
$$
X \iprod \omega(Y_1,\ldots,Y_{p-1}):= \omega(X,Y_1,\ldots,Y_{p-1})
$$
for vector fields $Y_j$. So the divergence $\divv X$ of a vector field is characterized by the formula
$$
d( X \iprod dV_\godd) = (\divv X) dV_\godd,
$$
while the Laplacian of a function $f$ is given by
$$
\Delta f = \divv (\grad f).
$$
Local coordinate representations are of the `usual form':
$$
\divv \left( X^i \del_i\right) = \frac{1}{\sqrt{\det(\godd_{kl})}} \del_i (X^i \sqrt{\det(\godd_{kl})}),
\qquad
\Delta f = \frac{1}{\sqrt{\det(\godd_{kl})}} \del_i \left(\godd^{ij} \sqrt{\det(\godd_{kl})} \frac{\del f}{\del x^j} \right).
$$

\section{ODD Riemannian Manifolds as Metric Spaces}
\label{sec:metricspace}

An important property of ODD Riemannian metrics is that despite their degeneracy, they still induce a metric on the underlying manifold. We prove this fact in the following.

\subsection{Curves and Lengths}

In this subsection, we introduce \emph{ODD piecewise regular curves} and their lengths.

\begin{definition}
\label{def:ODD-regular-curve}
Let $(M,\godd)$ be an ODD Riemannian manifold. Let $a,b \in \RR$ and $\gamma \colon [a,b] \to M$ be a continuous curve segment. 
\begin{enumerate}
    \item When $\gamma\colon (a,b) \to M$ is a real-analytic map, and $|\dot{\gamma}(t)|_{\godd}$ is bounded on $(a,b)$, we say that $\gamma$ is a \emph{regular curve} w.r.t. $\godd$.
    \item When $\gamma\colon [a,b] \to M$ is an algebroid function and there is a finite partition $a=a_0<a_1<\ldots<a_k=b$, such that:
    \begin{itemize}
        \item $\left. \gamma \right|_{[a_{j-1},a_{j}]}$ is a regular curve w.r.t. $\godd$,
        \item for every $0<j<k$, we have
        $$
        \lim_{t \nearrow a_j} \frac{\dot{\gamma}(t)}{|\dot{\gamma}(t)|_\godd} = \lim_{t \searrow a_j} \frac{\dot{\gamma}(t)}{|\dot{\gamma}(t)|_\godd},
        $$
        or equivalently 
         $$
        \lim_{t \nearrow a_j} \frac{\dot{\gamma}(t)}{|\dot{\gamma}(t)|_g} = \lim_{t \searrow a_j} \frac{\dot{\gamma}(t)}{|\dot{\gamma}(t)|_g},
        $$
        with respect to some local Riemannian metric $g$, then we call $\gamma$  an \emph{ODD regular curve} w.r.t. $\godd$.
    \end{itemize}
    \item If there is a finite partition $a=a_0<a_1<\ldots<a_k=b$, such that $\left. \gamma \right|_{[a_{j-1},a_{j}]}$ is an ODD regular curve for any $1\leq j \leq k$, then we say that $\gamma$ is an \emph{ODD piecewise regular curve}.
\end{enumerate}
\end{definition}

\begin{remark}
{\em
One can check that Item (2) is equivalent to saying that for every local ODD orthonormal frame $(E_i)$ on $M$, $\dot{\gamma}(t)$ can be expressed as $\dot{\gamma}(t)= \dot{\gamma}^i(t) \left.E_i\right|_{\gamma(t)}$ with analytic local functions on $[a,b]$. 
This could be interpreted as saying that $\dot{\gamma}$ is a well-defined \emph{ODD vector field along $\gamma$} (compare Def.~\ref{def:ODD-vf-along-curve}). The partition in Item (2) obviously comes from the intersections of $\gamma$ with the degeneracy locus $\mathcal{D}$.

Here we maybe see for the first time why in Definition~\ref{def:ODD-metric}, it is important to impose Condition (3), namely that $\godd$ does not degenerate tangentially to the degeneration locus. Otherwise, for any $\gamma$ lying \emph{in} some $N_i$, where $\godd$ degenerates tangentially, it would not be possible to express $\dot{\gamma}$ in this way.
}
\end{remark}

\begin{example}
\label{ex:RegularCurveODDvsClass}
 We consider $M:=\RR$ together with the ODD metric $\godd:= x^2 \mathrm{dx}^{\odot 2}$. As an example of an ODD regular curve that is not regular in the classical sense, the curve 
$
\gamma \colon [-1,1] \to M
$ with the two segments
$$
\left.\gamma\right|_{[-1;0]}\colon  t \mapsto -(-t)^{1/2},
\qquad
\left.\gamma\right|_{[0;1]}\colon  t \mapsto t^{1/2},
$$
is continuous, but not piecewise regular in the classical sense of~\cite{Lee18}, since $\lim_{t \to 0} \dot{\gamma}(t) = \infty$. In fact, we can see that $\gamma$ is an algebroid function solving $\gamma^4=t^2$. Moreover, computing the norm of the derivative for $t \in [0,1]$ yields
$$
|\dot{\gamma}(t)|_\godd = \sqrt{ (t^{\frac{1}{2}})^2 \left(\frac{1}{2}  t^{-\frac{1}{2}}\right)^2}=\frac{1}{2},
$$
while for $t \in [-1,0]$ we get
$$
|\dot{\gamma}(t)|_\godd = \sqrt{ (- (-t)^{\frac{1}{2}})^2 \left(\frac{1}{2}  (-t)^{-\frac{1}{2}}\right)^2}=\frac{1}{2}
$$
as well. This yields that $\gamma$ has \emph{constant speed} with respect to $\godd$. Now for the (global) orthonormal frame $(E:=\frac{\del_x}{|x|})$, we have
$$
\dot{\gamma}(t)=\frac{E(\gamma(t))}{2},
$$
so $\dot{\gamma}$ is an ODD vector field by Definition~\ref{def:ODD-vectorfield} and thus $\gamma$ is an ODD regular curve.

\end{example} 

\begin{definition}
Let $(M,\godd)$ be an ODD Riemannian manifold and $\gamma \colon [a,b] \to M$ be an ODD piecewise regular curve. Then an \emph{ODD reparametrization} of $\gamma$ is an ODD piecewise regular curve $\tilde{\gamma}=\gamma \circ \phi \colon [c,d] \to M$, such that
\begin{enumerate}
    \item $\phi\colon [c,d] \to [a,b]$ is a homeomorphism,
    \item there is a finite partition $c=c_0<c_1<\ldots<c_l=d$, such that $\left. \phi \right|_{(c_{j-1},c_{j})}$ is an analytic diffeomorphism on its image. 
\end{enumerate}
\end{definition}

\begin{example}
For the curve $\gamma$ from Example~\ref{ex:RegularCurveODDvsClass}, the obvious classically regular parametrization $\tilde{\gamma}:=id \colon [-1,1] \to M$ is an ODD reparametrization, since it is of the form  $\tilde{\gamma}= \gamma \circ \phi$, with 
$$
\left.\phi\right|_{[-1;0]}\colon  t \mapsto -t^2,
\qquad
\left.\gamma\right|_{[0;1]}\colon  t \mapsto t^{2},
$$
and the restrictions of $\phi$ to the \emph{open intervals} $(-1,0)$ and $(0,-1)$ are analytic diffeomorphisms. However, $\gamma$ can not be a classical reparametrization of $\tilde{\gamma}$.
\end{example}

By Definition~\ref{def:ODD-regular-curve}, the (piecewise) regular curves from classical Riemannian geometry (see e.g.~\cite{Lee18}) are in particular ODD (piecewise) regular curves. But ODD curves are more general, since we allow the derivative $\dot{\gamma}$ to diverge at interval boundaries, as long as the $\godd$-norm $|\dot{\gamma}(t)|_{\godd}$ stays bounded. However, as the following statement shows, there is always a reparametrization (and repartition) of an ODD piecewise regular curve, that is a classical piecewise regular curve.

\begin{proposition}
\label{prop:ODDcurvesareregular}
Let $(M,\godd)$ be an ODD Riemannian manifold. Let $a,b \in \RR$ and $\gamma \colon [a,b] \to M$ be a continuous curve segment. Then the following hold.
\begin{enumerate}
    \item If $\gamma$ is an ODD regular curve w.r.t. $\godd$, there is a reparametrization which is a classical regular curve.
    \item If $\gamma$ is an ODD piecewise regular curve w.r.t. $\godd$,  there is a reparametrization which is a classical piecewise regular curve.
\end{enumerate}
\end{proposition}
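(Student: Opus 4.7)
The plan is to prove the first assertion; the second then follows by concatenating the reparametrizations produced for each piece $\gamma|_{[a_{j-1},a_j]}$, with the ODD compatibility condition at interior partition points (equal unit tangents from each side) ensuring that the glued curve is classically piecewise regular in the sense of~\cite{Lee18}. So fix an ODD regular curve $\gamma\colon[a,b]\to M$. Because $\gamma$ is algebroid and analytic on $(a,b)$, the preimage $\gamma^{-1}(\mathcal{D})$ is a finite subset $\{t_1,\ldots,t_N\}\subseteq [a,b]$; away from these points $\gamma$ is already analytic with $|\dot\gamma|_\godd$ comparable to the ordinary velocity in any auxiliary smooth background Riemannian metric, hence classically regular. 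It therefore suffices to produce a local reparametrization on a half- or two-sided neighborhood of each $t_i$ and interpolate by the identity in between.

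Fix $t_0\in\gamma^{-1}(\mathcal{D})$ and, using the SNC assumption (Definition~\ref{def:ODD-metric}(4)), choose local coordinates $(x^1,\ldots,x^n)$ around $\gamma(t_0)$ in which the components of $\mathcal{D}$ through $\gamma(t_0)$ are coordinate hyperplanes. Each component $\gamma^i(t)-\gamma^i(t_0)$ is a continuous algebroid function of $t$ vanishing at $t_0$, hence admits a convergent Puiseux expansion $c_i(t-t_0)^{\beta_i}+(\text{higher order})$ on each side, with $\beta_i\in\mathbb{Q}_{>0}$. Some components are identically $\gamma^i(t_0)$, corresponding to directions tangent to the stratum of $\mathcal{D}$ containing $\gamma(t_0)$. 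Letting $N$ be a common denominator of the $\beta_i$ and putting $\phi(s)=t_0+s^N$ (or a two-sided signed variant), each $\gamma^i\circ\phi$ becomes analytic near $s=0$ with leading order $N\beta_i\in\mathbb{Z}_{\geq 1}$.

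The next step, and the real content of the proposition, is to choose $N$ so that $\min_i N\beta_i=1$, i.e.\ so that $(\gamma\circ\phi)'(0)\neq 0$. Here I would use the characterization of ODD regularity recorded in the remark following Definition~\ref{def:ODD-regular-curve}: $\dot\gamma$ has analytic coefficients in any local ODD orthonormal frame $(E_i)$. Near a maximal stratum the transverse frame vectors have the explicit form $E_i=h_i/\sqrt{\godd_{ii}}\,\del_{x^i}$ with $\godd_{ii}$ vanishing to integer order along the corresponding hyperplane, so writing $\dot\gamma$ analytically in this frame pins down the leading Puiseux exponent of $\gamma^i$ as the reciprocal $1/q_i$ of an integer $q_i$ determined by the degeneracy order of $\godd$ in that normal direction. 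Taking $N=\mathrm{lcm}(q_i)$ then gives $\min_i N\beta_i=N/\max_i q_i$, which equals $1$ precisely when the largest $q_i$ is divisible by all the others. For general points of the top stratum through which $\gamma$ passes this is automatic, and for deeper strata one falls back on the inductive hypothesis provided by Definition~\ref{def:ODD-metric}(3), applied to the ODD metric induced on the stratum in question.

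The main obstacle is exactly this arithmetic alignment of leading Puiseux exponents. A priori, monomial substitution by any common denominator of the $\beta_i$ only guarantees analyticity, not a nonvanishing derivative; ensuring the latter amounts to proving that the exponents forced by the analyticity of $\dot\gamma$ in an ODD orthonormal frame, together with the SNC structure and the inductive compatibility condition (3), line up in such a way that a single monomial reparametrization resolves the curve \emph{and} produces a nonzero tangent. I would attempt this by induction on the stratum depth of $\gamma(t_0)\in\mathcal{D}$: the base case is a transverse crossing of a smooth component $N_j$, where the analysis reduces to a one-variable calculation essentially identical to Example~\ref{ex:RegularCurveODDvsClass}, and the inductive step uses the restriction $\godd^{N_j}$ to handle the tangential directions while the transverse direction contributes a single $1/q$-exponent from the normal degeneracy of $\godd$.
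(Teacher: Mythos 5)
Your route is genuinely different from the paper's, and the step you yourself flag as ``the main obstacle'' --- choosing the monomial substitution $\phi(s)=t_0+s^N$ so that $\min_i N\beta_i=1$ --- is a real gap, not a technicality you can defer to an induction. Indeed it is false in general that the leading Puiseux exponents align: take $\godd=x^2\,\mathrm{dx}^{\odot 2}+y^4\,\mathrm{dy}^{\odot 2}$ on $\RR^2$ and $\gamma(t)=(\mathrm{sgn}(t)|t|^{1/2},\,\mathrm{sgn}(t)|t|^{1/3})$ through the origin. One checks that $|\dot\gamma|_\godd^2\equiv\tfrac14+\tfrac19$ and that the one-sided limits of the Euclidean unit tangent agree (both equal $(0,1)$), so this is an ODD regular curve; but the leading exponents are $1/2$ and $1/3$, and any common denominator $N$ produces $(s^{N/2},s^{N/3})$ with both orders at least $2$, hence vanishing derivative at $s=0$. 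The divisibility condition you isolate (``the largest $q_i$ divides all the others'') genuinely fails at deeper strata, so the proposed induction on stratum depth cannot close the argument; your approach, as designed, is aiming for something (an \emph{analytic} reparametrization with nonzero velocity at the junction) that is strictly stronger than the proposition and sometimes unattainable.

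The paper's proof sidesteps all of this. It reparametrizes by arc length with respect to an auxiliary classical Riemannian metric $g$ on the chart, i.e.\ solves $\dot\phi=1/|\dot\gamma|_g$ on each open subinterval where $\gamma$ is analytic. The reparametrized velocity is then exactly the $g$-unit tangent $\dot\gamma/|\dot\gamma|_g$, which has $g$-norm one and hence never vanishes, and Definition~\ref{def:ODD-regular-curve}(2) \emph{already postulates} that the one-sided limits of this unit tangent coincide at each partition point $a_j$. So the behaviour at the bad points is built into the definition of an ODD regular curve and no Puiseux exponent bookkeeping is required; in the example above this yields the curve $s\mapsto(\mathrm{const}\cdot s^{3/2}+\ldots,\,s+\ldots)$, which is regular in the required sense though not analytic at $s=0$. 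Your reduction of Item (2) to Item (1) by concatenating the pieces and invoking the matching condition at interior partition points does agree with the paper.
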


\begin{proof}
We begin with Item (1). We can assume that $\gamma$ lies in some coordinate chart and that $\dot{\gamma} \neq 0$ on $[a,b]$. For some Riemannian metric $g$ on the coordinate chart, we can solve the following differential equation on every open interval $(a_{j-1},a_j)$, where $\gamma$ is real analytic. 
$$
\dot{\phi_j}(t)=\frac{1}{|\dot{\gamma}(t)|_g}
$$
 The solution $\phi_j$ is analytic, since the right hand side is analytic on $(a,b)$. So $\gamma \circ \phi$ is a reparametrization of $\left.\gamma\right|_{[a_j,a_{j+1}]}$ with constant speed. Due to Def.~\ref{def:ODD-regular-curve} (2), we have
  $$
        \lim_{t \nearrow a_j} \frac{\dot{\gamma}(t)}{|\dot{\gamma}(t)|_g} = \lim_{t \searrow a_j} \frac{\dot{\gamma}(t)}{|\dot{\gamma}(t)|_g},
        $$
        so $\gamma \circ \phi$ is regular on the whole interval $[a,b]$. 
 
 For Item (2), let  $\gamma$ be an ODD piecewise regular curve. Then there is a finite partition into ODD regular curve segments, so Item (2) follows from Item (1).
\end{proof}

The following is now well-defined as in the classical case.

\begin{definition}
Let $(M,\godd)$ be an ODD Riemannian manifold and $\gamma \colon [a,b] \to M$ be an ODD piecewise regular curve. We define the \emph{length of $\gamma$} to be
$$
L_\godd(\gamma) := \int_a^b |\dot{\gamma}(t)|_\godd dt.
$$
\end{definition}

It is clear that properties of the length function such as additivity and invariance under ODD reparametrization and ODD isometries follow in the same way as in the classical case from the linearity of the integral and the chain rule. Also, it is clear that any ODD piecewise regular curve has a unique forward parametrization by arc length (compare~\cite[Prop.~2.49]{Lee18}).

\subsection{The Distance Function}

On a connected ODD Riemannian manifold $(M,\godd)$, by the same arguments as in the classical case, we know that any pair of points $p,q \in M$ can be joined by an ODD piecewise regular curve. We define the ODD Riemannian distance $d_\godd(p,q)<\infty$ to be the infimum of all lengths of ODD piecewise regular curves joining $p$ and $q$. 
In the following, we prove that $(M,d_\godd)$ is a metric space, and the induced topology is the same as the manifold topology of $M$. 
Here we note that we cannot compare lengths of tangent vectors with respect to $\godd$ and a classical Riemannian metric $g$ in the sense that there exist positive constants $c$ and $C$ with
$$
c|v|_g \leq |v|_\godd \leq C|v|_g
$$
as usual. In particular, we do not have an analogue of~\cite[Lemma~2.53]{Lee18}. But we can prove an analogue of the crucial~\cite[Lemma~2.54]{Lee18} with other methods.

\begin{lemma}
\label{le:comparewithRiemMetric}
Let $(M,\godd)$ be an ODD Riemannian manifold and $d_\godd$ its distance function. Let  $p \in M$. Then there is an open coordinate neighbourhood $p \in U$ and an open $p \in V \subseteq U$   and positive constants $C$ and $D$, such that the following hold:

\begin{enumerate}
\item If $q \in V$, then $d_\godd(p,q)\leq C d_g(p,q)$, where $g$ is the Euclidean metric in $V$.
 \item If $q \notin V$, then $d_\godd(p,q) \geq D$.
\end{enumerate}

\end{lemma}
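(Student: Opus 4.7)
The plan is to prove the two parts by quite different means.

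For part (1), take $U$ a coordinate chart around $p$ and $V \subseteq U$ a small coordinate ball with $\overline V$ compactly contained in $U$; let $g$ denote the Euclidean metric in these coordinates. Since the coefficient matrix $(\godd_{ij})$ is analytic on $\overline V$, its maximum eigenvalue is uniformly bounded by some $C^2 > 0$; equivalently, $|v|_\godd \leq C\,|v|_g$ for every $v$ based at a point of $\overline V$. For any $q \in V$, the Euclidean straight segment $t \mapsto p + t(q-p)$ is real-analytic with bounded $\godd$-speed, hence is an ODD regular curve, and its $\godd$-length is at most $C \cdot d_g(p, q)$. Taking the infimum over ODD piecewise regular curves joining $p$ and $q$ gives $d_\godd(p, q) \leq C \cdot d_g(p, q)$.

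For part (2), take SNC coordinates $(x^i)$ centered at $p = 0$ (so every $N_j$ through $p$ is a coordinate subspace) and set $V = \{|x|_g < \epsilon\}$ for small enough $\epsilon > 0$. The key technical input is a pointwise lower bound of the form: there exist algebroid functions $f_1, \ldots, f_N$ on $U$ with $f_k(p) = 0$ such that
\[
\godd \;\geq\; \sum_{k=1}^{N} df_k \otimes df_k
\]
as symmetric bilinear forms on a neighborhood of $p$, and such that the combined map $F = (f_1, \ldots, f_N) \colon V \to \RR^N$ satisfies $\|F(q)\|_g \geq D$ for every $q \in \partial V$, for some $D > 0$. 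In the diagonal ``model'' case $\godd = \sum_i (x^i)^{2k_i} (dx^i)^{\odot 2}$, this is immediate with $f_i = (x^i)^{k_i+1}/(k_i+1)$; in general, the construction requires a careful analysis combining analyticity of $\godd$, semipositivity, the SNC shape of $\mathcal{D}$, and the ODD compatibility condition~(3) of Definition~\ref{def:ODD-metric}.

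Granting the existence of such $(f_k)$, for any ODD piecewise regular curve $\gamma \colon [a,b] \to M$ from $p$ to some $r \in \partial V$,
\[
L_\godd(\gamma) \;=\; \int_a^b |\dot\gamma|_\godd\, dt \;\geq\; \int_a^b \sqrt{\textstyle\sum_k \bigl(\tfrac{d}{dt}(f_k \circ \gamma)\bigr)^2}\, dt \;\geq\; \|F(r) - F(p)\|_g \;\geq\; D,
\]
where the middle step is the classical fact that a curve in Euclidean space is at least as long as the Euclidean distance between its endpoints. Since every ODD piecewise regular curve from $p$ to $q \notin V$ contains a sub-segment from $p$ to $\partial V$, taking the infimum over all such curves yields $d_\godd(p, q) \geq D$.

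The main obstacle is the construction of the $f_k$ in the non-diagonal case; this is essentially an algebroid ``Pythagorean'' decomposition of $\godd$ around an SNC degeneracy locus, and it is the place where the subtle structure of ODD metrics enters. A possible alternative is induction on $\dim M$: split $\gamma$ into maximal pieces contained in a single stratum of $\mathcal{D}$ and pieces moving transversely, apply the inductive hypothesis on the lower-dimensional $N_j$'s to the former, and use that $\godd$ is classically Riemannian (hence locally comparable to $g$) off $\mathcal{D}$ for the latter; the delicate step is to glue these uniform lower bounds without loss as the transverse pieces shrink toward points of $\mathcal{D}$.
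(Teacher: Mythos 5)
Your part (1) is correct and is exactly the paper's argument: compactness of $\overline V$ gives $|v|_\godd\leq C|v|_g$ there, and the straight segment from $p$ to $q$ is an ODD regular curve, so $d_\godd(p,q)\leq C\,d_g(p,q)$. Part (2), however, has a genuine gap: the entire difficulty of the lemma is packed into the unproved claim that there exist (single-valued branches of) algebroid functions $f_1,\dots,f_N$ with $\godd\geq\sum_k df_k\otimes df_k$ on a neighbourhood of $\overline V$ and $\|F\|$ uniformly positive on $\partial V$. You verify this only for the diagonal model $\sum_i (x^i)^{2k_i}(\mathrm{dx}^i)^{\odot 2}$ and give no reduction of a general ODD metric to that model. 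An ODD metric need not be diagonal near $\mathcal{D}$, and its kernel along a stratum need not be spanned by coordinate directions (compare the cross term $2(y^2-x^2)\,\mathrm{dx}\odot\mathrm{dy}$ in Example~\ref{ex:nonuniqueintegralcurves}, or a metric such as $(\mathrm{dx}-\mathrm{dy})^{\odot 2}+x^2\,\mathrm{dy}^{\odot 2}$, which is ODD but whose degenerate direction along $\{x=0\}$ is not a coordinate axis); producing the $f_k$ there is precisely where analyticity, semipositivity, the SNC hypothesis and condition (3) of Definition~\ref{def:ODD-metric} must be used, and you explicitly defer that analysis. The alternative inductive scheme is likewise only a sketch whose gluing step you yourself flag as open. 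So, granting your ``key technical input'' the chord-versus-length estimate is fine, but as written part (2) is not proved.

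For comparison, the paper avoids any such global ``Pythagorean'' decomposition by localizing in one coordinate direction at a time. After replacing $q$ by the first exit point from $V$, it takes a small coordinate box $K'=\{|x_i|\leq\epsilon/2\}$ around $p$ (in SNC coordinates) and notes that any competitor curve must leave $K'$ through some face $\{|x_j|=\epsilon/2\}$, hence must cross the compact slab $K_j'=K'\cap\{|x_j|\geq\epsilon/4\}$ from $|x_j|=\epsilon/4$ to $|x_j|=\epsilon/2$. On $K_j'$ one has $x_j\neq 0$, so the components of $\mathcal{D}$ cut out by $x_j$ are avoided and $\godd$ does not degenerate in the $x_j$-direction there; compactness of $K_j'$ then gives a uniform lower bound $D_j>0$ for the $\godd$-length of this crossing, and $D=\min_j D_j$ works. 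The moral is that only a one-directional lower bound on a region avoiding the corresponding degeneracy is needed, not a simultaneous bound valid on all of $V$; if you want to pursue your decomposition approach, the hard case is exactly near the deepest strata where several transverse directions degenerate at once, which is where the diagonal-model intuition stops.
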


\begin{proof}
We choose a small Euclidean ball $V:=B_\epsilon(p)$ around $p$ in some coordinate neighbourhood $U$, such that we can assume that the $N_j$ are given by the vanishing of coordinates and $p$ is the origin. 

We prove Item (1). Let $q \in V$. Since $\overline{V}$ is compact and $\godd$ is semipositive, but may degenerate somewhere in $V$, we have at least the first inequality from~\cite[Lemma~2.53]{Lee18}:
$$
|v|_\godd \leq C |v|_g
$$
for some positive constant $C$ and all classical tangent vectors $t \in T_{p'}M$ for all $p' \in \overline{V}$. Now let $\gamma$ be a regular parametrization of the line segment from $p$ to $q$. In particular, $\gamma$ is an ODD regular curve. Thus due to the above inequality, we have
$$
d_\godd(p,q) \leq L_\godd(\gamma) \leq C L_g(\gamma) = C d_g(p,q)
$$
and Item (a) is proven.

Now in order to prove (b), let $q \notin V$ and $\gamma$ be an ODD piecewise regular curve from $p$ to $q$. We can safely assume that $q \in \del V$ by if necessary replacing it with $\gamma(\mathrm{inf}(t;\gamma(t) \notin V))$.

Now let $K':=\{x ~|~ |x_i| \leq \epsilon/2 \forall 1\leq i \leq n \} \subseteq V$ be a small box around $p=0$ and let $K_j':=K' \cap \{x ~|~ |x_j| \geq \epsilon/4\}$ for $j=1,\ldots,n$. We can assume that $K \cap \gamma$ is connected and thus $\gamma$ has a unique leaving point $\{s\}=K' \cap \gamma$. In particular, there is $j \in \{1,\ldots,n\}$, such that the $j$-th coordinate of $s$ is equal to $\epsilon/2$. The situation is illustrated in the picture below.

\begin{center}
\begin{tikzpicture}

\draw[draw=black,fill=black!20, opacity=0.8] (-1,0.5) rectangle (1,1);
\draw[draw=black,fill=black!20, opacity=0.8] (-1,-1) rectangle (1,-0.5);
\coordinate[label=180:$K_1'$] (K) at (-1,0.75);

\draw[draw=black,fill=black!20, opacity=0.8] (0.5,-1) rectangle (1,1);
\draw[draw=black,fill=black!20, opacity=0.8] (-1,-1) rectangle (-0.5,1);
\coordinate[label=90:$K_2'$] (K) at (0.75,1);

\coordinate[label=-135:$p$] (P) at (0,0);
\fill (P) circle (2pt);
\coordinate[label=45:$q$] (Q) at (1.9,0.6245);
\fill (Q) circle (2pt);

\coordinate[label=135:$q'$] (Q') at (-0.8,1.833);
\fill (Q') circle (2pt);

\coordinate[label=45:$s$] (S) at (1,0.25);
\fill (S) circle (2pt);

\draw (Q') .. controls (-1,0) and (0,1) .. node[right, pos=0.1] {$\gamma'$} (P);

  \draw (-2.5,0) -- (2.5,0);
  \draw (0,-2.5) -- (0,2.5);
  \draw (0,0) circle [radius=2cm];
  \draw (-1,-1) rectangle (1,1);
  
  \coordinate[label=90:$N_1$] (X) at (0,2.5);
  \coordinate[label=0:$N_2$] (Y) at (2.5,0);

\draw (P) .. controls (1,0.5) and (1.5,0) .. node[below, pos=0.85] {$\gamma$} (Q);  
\end{tikzpicture}
\end{center}

The crucial point is that $\godd$ does not degenerate on $K_j'$ in $x_j$-direction and since $K_j'$ is compact and the entry point of $\gamma$ in $K_j'$ has it's $j$-th  equal to $\epsilon/4$, there exist $D_j$, $j=1,\ldots,n$,  such that $L_\godd(\gamma)\geq D_j$, \emph{if} the point $s$ where $\gamma$ leaves $K'$ has the $j$-th coordinate equal to $\epsilon/2$. Taking $D:=\min_{j=1,\ldots,n} D_j$ yields the conclusion.

\end{proof}

\begin{theorem}
\label{thm:metricspace}
Let $(M,\godd)$ be a connected ODD Riemannian manifold and $d_\godd$ its distance function. Then $(M,d_\godd)$ is a metric space and the metric topology associated to $d_\godd$ is the same as the topology of $M$ as a manifold.
\end{theorem}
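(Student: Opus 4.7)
The plan is to reduce everything to Lemma~\ref{le:comparewithRiemMetric}, which already encapsulates the essential analytic content; the remaining work is the same formal argument as in the classical proof (cf.~\cite[Thm.~2.55]{Lee18}).

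First, I would verify the metric space axioms for $d_\godd$. Symmetry is immediate, since any ODD piecewise regular curve from $p$ to $q$ has a reverse reparametrization of the same length from $q$ to $p$. The triangle inequality follows by concatenating curves: given $p,q,r$ and curves realizing lengths close to $d_\godd(p,q)$ and $d_\godd(q,r)$, their concatenation is still an ODD piecewise regular curve (the tangent compatibility condition at the junction point is not required for piecewise regularity), so $d_\godd(p,r) \leq d_\godd(p,q) + d_\godd(q,r)$. Non-negativity is obvious. The only nontrivial axiom is positivity: if $p \neq q$, use Hausdorffness of $M$ to pick a chart around $p$ and an open neighborhood $V \ni p$ from Lemma~\ref{le:comparewithRiemMetric} with $q \notin V$. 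Part~(2) of that lemma then gives $d_\godd(p,q) \geq D > 0$.

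Next, I would show that the metric topology $\tau_{d_\godd}$ coincides with the manifold topology $\tau_M$. Both directions follow from Lemma~\ref{le:comparewithRiemMetric}. For the inclusion $\tau_{d_\godd} \subseteq \tau_M$, fix $p \in M$ and an arbitrary $d_\godd$-ball $B^{d_\godd}_r(p)$. By part~(1), there exist an open coordinate neighborhood $V \ni p$ and a constant $C > 0$ with $d_\godd(p,q) \leq C d_g(p,q)$ for all $q \in V$, where $g$ is the Euclidean metric in the chart. Hence the Euclidean ball $B^g_{r/C}(p) \cap V$ is contained in $B^{d_\godd}_r(p)$, so the latter is a manifold neighborhood of $p$. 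Conversely, to show $\tau_M \subseteq \tau_{d_\godd}$, let $W \in \tau_M$ contain $p$. Shrinking $W$ if necessary, we may assume it is contained in an open neighborhood $V$ as supplied by the lemma. Then part~(2) gives $D > 0$ such that every $q$ with $d_\godd(p,q) < D$ lies in $V \subseteq W$, i.e., $B^{d_\godd}_D(p) \subseteq W$. Thus $W$ is a $d_\godd$-neighborhood of each of its points.

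Combining these two inclusions yields $\tau_{d_\godd} = \tau_M$, which together with the metric space axioms completes the proof. The main content is entirely absorbed into Lemma~\ref{le:comparewithRiemMetric}; the only potential subtlety is the positivity axiom, but the ``box argument'' already built into part~(2) of that lemma handles the delicate case where $p$ lies on the degeneracy locus $\mathcal{D}$, where naive comparison with a Riemannian metric is unavailable.
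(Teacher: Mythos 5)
Your proposal is correct and takes essentially the same route as the paper, whose proof is just the citation ``verbatim to~\cite[Theorem~2.55]{Lee18} with~\cite[Lemma~2.54]{Lee18} replaced by Lemma~\ref{le:comparewithRiemMetric}''; you have simply written out that classical argument in full. The one point worth noting is that your positivity step implicitly requires the neighbourhood $V$ of Lemma~\ref{le:comparewithRiemMetric} to be choosable inside any prescribed neighbourhood of $p$ (so that $q \notin V$), which is clear from the lemma's proof (where $V$ is a small ball $B_\epsilon(p)$) even though the lemma's statement does not say so explicitly.
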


\begin{proof}
The proof is verbatim to the proof of~\cite[Theorem~2.55]{Lee18}, with ~\cite[Lemma~2.54]{Lee18} replaced by our Lemma~\ref{le:comparewithRiemMetric}.
 
\end{proof}

\section{The ODD Levi-Civita Connection, integrability of vector fields and geodesics}

\subsection{The ODD Levi-Civita Connection}

The ODD Levi-Civita connection will be a particular example of a \emph{real-meromorphic connection}, a notion that we will define first. We note that related notions of meromorphic connections can be found in the literature, cf.~\cite[Ch.~5]{HTT08}.

\begin{definition}
Let $M$ be a real-analytic manifold, $E \to M$ an analytic vector bundle, and $\Alg(TM)$ and $\Alg(E)$ be the $\Alg(M)$-vector spaces of algebroid vector fields and algebroid sections of $E$.
An \emph{algebroid connection} in $E$ is a map
$$
\nabla \colon  \Alg(TM) \times \Alg(E) \to \Alg(E); \quad (X,Y) \mapsto \nabla_X Y,
$$
with the following properties:
\begin{enumerate}
    \item $\nabla_X Y$ is $\Alg(M)$-linear in $X$ and $\RR$-linear in $Y$.
    \item For $f \in \Alg(M)$, we have
    $$
    \nabla_X (fY)= f \nabla_X Y + (Xf)Y
    $$
\end{enumerate}
\end{definition}

Similarly to the case of classical Koszul connections, compare~\cite[Le.~4.1]{Lee18}, it is clear that the 'value' of $\nabla_X Y$ at some $p \in M$ (either as a vector of $E_p$ or as a certain kind of 'pole') depends only on $X$ and $Y$ on small neighbourhoods of $p$. But in contrary to the classical case, if $X$ has a pole at $p$, we can not say that $\nabla_X Y$ depends only on $X$ at $p$. However, for the ODD Levi-Civita connection, this is true in a certain sense, cf. Remark~\ref{rem:ODDLV-conn-locality}

In the following, we will consider connections in the tangent bundle. As usual, in an analytic local frame $(E_i)$, we have the connection coefficients $\Gamma_{ij}^k$ satisfying
$$
\nabla_{E_i} E_j = \Gamma_{ij}^k E_k,
$$
but here the $\Gamma_{ij}^k$ are algebroid functions. As in the classical case, the connection is completely determined by the $\Gamma_{ij}^k$ locally, as for algebroid vector fields $X:=X^i E_i$ and $Y:=Y^i E_i$, we get
$$
\nabla_X Y = (X(Y^k) X^iY^j +  \Gamma_{ij}^k)E_k
$$
exactly in the same way as in the proof of~\cite[Prop.~4.6]{Lee18}. Also in the same way, we have a transformation law for the $\Gamma_{ij}^k$. When $(E_i)$ and $(\tilde{E}_j)$ are two analytic frames on some open $U \subseteq M$, related by $\tilde{E}_i=A_i^j E_j$ for some (analytic!) matrix of functions $A_i^j$, and letting $\Gamma_{ij}^k$ and $\tilde{\Gamma}_{ij}^k$ be the connection coefficients with respect to $(E_i)$ and $(\tilde{E}_j)$ respectively, then 
$$
\tilde{\Gamma}_{ij}^k = (A^{-1})_p^kA_i^qA_j^r \Gamma_{pq}^r + (A^{-1})_p^kA_i^q E_q(A_j^p).
$$
Moreover, as in the classical case, a connection in $TM$ extends to arbitrary tensor fields, compare~\cite[Prop.~4.15]{Lee18}. So we can define the \emph{total covariant derivative} of a $(k,l)$-tensor field $F$ to be the $(k,l+1)$-tensor field defined by
$$
(\nabla F)(\omega^1,\ldots,\omega^k,Y^1,\ldots,Y^k,X):=(\nabla_X F)(\omega^1,\ldots,\omega^k,Y^1,\ldots,Y^k)
$$
for arbitrary $1$-forms $\omega^j$ and vector fields $Y^i,X$. 
Finally, we can define the covariant derivative of vector fields along analytic curves as usual, but we can not guarantee the existence of meaningful geodesics for a general algebroid connection. We have to first define the Levi-Civita connection of an ODD metric, then we can prove the existence of ODD geodesics.

First, when $(M,\godd)$ is an ODD Riemannian manifold, we call an algebroid connection $\nabla$ an \emph{ODD metric connection}, if for all $X,Y,Z \in \ReMer(TM)$, it satisfies
$$
\nabla_X \langle Y, Z \rangle = \langle \nabla_X Y, Z \rangle + \langle Y, \nabla_X Z \rangle.
$$
As in the classical case, this is equivalent to the total covariant derivative of $\godd$ being trivial. Moreover, we say that an algebroid connection is \emph{symmetric}, if the torsion tensor
$$
\tau(X,Y):=\nabla_X Y - \nabla_Y X - [X,Y], 
$$
where $[X,Y]$ is the Lie bracket of vector fields, vanishes identically on $M$ for all $X,Y \in \Alg(TM)$. As in the classical case, it follows that $\nabla$ is symmetric, if and only if the connection coefficients in every \emph{coordinate} frame satisfy $\Gamma^k_{ij}=\Gamma^k_{ji}$.

\begin{theorem}[Fundamental Theorem of ODD Riemannian geometry]
Let $(M,\godd)$ be an odd Riemannian manifold. There exists a unique metric and symmetric connection $\nabla$ in $TM$. We call it the ODD Levi-Civita connection of $\godd$.
\end{theorem}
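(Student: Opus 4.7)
The plan is to adapt the classical proof of the Fundamental Theorem of Riemannian geometry, using the Koszul formula, but working throughout in the $\Alg(M)$-module of algebroid vector fields. The key observation that unlocks the classical argument in the ODD setting is already recorded in the paper: over $\Alg(M)$, the musical isomorphism $\flat\colon \Alg(TM) \to \Alg(T^*M)$ is genuinely an isomorphism, even though $\godd$ degenerates at $\mathcal{D}$. Thus, although $\godd$ does not determine vector fields from their inner products with \emph{analytic} test fields, it does so among \emph{algebroid} test fields, and that is precisely what the Koszul argument needs.

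For uniqueness, I would assume $\nabla$ is both ODD-metric and symmetric, and run the standard three-term cyclic permutation of the metric compatibility identity
\[
X\langle Y, Z\rangle = \langle \nabla_X Y, Z\rangle + \langle Y, \nabla_X Z\rangle,
\]
eliminating two of the three covariant derivatives using the torsion-free identity $\nabla_X Y - \nabla_Y X = [X,Y]$. This yields the usual Koszul formula
\[
2\langle \nabla_X Y, Z\rangle = X\langle Y,Z\rangle + Y\langle X,Z\rangle - Z\langle X,Y\rangle - \langle Y,[X,Z]\rangle - \langle Z,[Y,X]\rangle + \langle X,[Z,Y]\rangle,
\]
valid for all $X,Y,Z \in \Alg(TM)$. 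Since the right-hand side is $\Alg(M)$-linear in $Z$, it defines an algebroid one-form $(\nabla_X Y)^\flat$, and the isomorphism $\sharp$ then recovers $\nabla_X Y$ uniquely.

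For existence, I would \emph{define} $\nabla_X Y$ by raising the index of the one-form appearing on the right of the Koszul formula. The verification splits into three routine checks. First, that the right-hand side indeed defines an algebroid one-form in $Z$: this reduces to $\Alg(M)$-linearity in $Z$, which follows term-by-term from the Leibniz rule and the identity $[X, fZ] = f[X,Z] + (Xf)Z$ for $f \in \Alg(M)$. Second, the connection axioms ($\Alg(M)$-linearity in $X$, the $\RR$-linearity and Leibniz rule in $Y$) follow from the standard manipulations carried out symbolically in $\Alg(M)$. Third, metric compatibility and the vanishing of torsion are obtained by symmetrizing and antisymmetrizing the defining identity in $Y, Z$ respectively, exactly as in the classical proof.

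The main technical point, and what I expect to be the only genuine obstacle beyond bookkeeping, is ensuring that every operation used stays within the algebroid category. Concretely, one needs: (i) that algebroid vector fields can differentiate algebroid functions to yield algebroid functions, (ii) that Lie brackets of algebroid vector fields are algebroid, and (iii) that the inverse matrix $\godd^{ij}$, which is only real-meromorphic, nevertheless acts on algebroid one-forms to produce algebroid vector fields. Point (iii) is immediate since $\ReMer(M) \subseteq \Alg(M)$; points (i) and (ii) follow because the algebraic closure of a differential field (here $\ReMer(M)$ with the usual derivations $\partial/\partial x^i$) is again differential, so derivatives of algebroid functions computed via implicit differentiation of their defining polynomial equations are again algebroid. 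Once these closure properties are recorded, the classical computation goes through verbatim and produces the desired $\nabla$.
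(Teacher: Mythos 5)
Your proposal is correct, and the uniqueness half is essentially identical to the paper's: both derive the Koszul formula by cyclically permuting the metric identity and then invoke the fact that $\flat$ is an isomorphism on $\Alg(TM)$ (injectivity is all that is needed) to conclude that $\nabla_X Y$ is determined. The existence half diverges in a standard but genuine way. The paper specializes the Koszul formula to a coordinate frame $(\del_i)$, where the brackets vanish, reads off the Christoffel symbols $\Gamma_{ij}^k = \tfrac{1}{2}\godd^{kl}(\del_i\godd_{jl}+\del_j\godd_{li}+\del_l\godd_{ij})$ as explicitly \emph{meromorphic} functions, and glues the local connections using uniqueness; this buys the explicit formulas that are used later (e.g.\ to compute $\Gamma_{11}^1 = 1/x$ in the running example and to observe that the connection ``adds'' poles only along $\mathcal{D}$). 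You instead define $\nabla_X Y$ globally by raising the index of the Koszul one-form and verify the connection axioms symbolically; this avoids the patching step but shifts the burden onto the closure properties of the algebroid category, which you correctly identify as the only nontrivial point: that $Xf \in \Alg(M)$ for $X \in \Alg(TM)$, $f \in \Alg(M)$, and that $[X,Y] \in \Alg(TM)$. Your justification (characteristic zero, so implicit differentiation of the separable minimal polynomial keeps derivatives inside the algebraic closure of the differential field $\ReMer(U)$) is sound and is in fact a point the paper leaves implicit, so recording it is a small improvement. Both routes are valid; the paper's yields sharper information about where the poles of the connection coefficients live, yours is cleaner as an abstract existence argument.
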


\begin{proof}
We follow the lines of the classical proof, first proving uniqueness.
We assume that $\nabla$ is such a metric and symmetric connection in $TM$ and let $X,Y,Z \in \Alg(TM)$. Writing the metric condition with $X,Y,Z$ in different orders, we get 
\begin{align*}
    X \langle Y, Z \rangle &= \langle \nabla_X Y, Z \rangle + \langle Y,  \nabla_X Z \rangle ,  \\
    Y \langle Z, X \rangle &= \langle \nabla_Y Z, X \rangle + \langle Z,  \nabla_Y X \rangle , \\
    Z \langle X, Y \rangle &= \langle \nabla_Z X, Y \rangle + \langle X,  \nabla_Z Y \rangle .
\end{align*}
Now we apply the symmetric condition to rewrite the last term on the right in every equation and get
\begin{align*}
    X \langle Y, Z \rangle &= \langle \nabla_X Y, Z \rangle + \langle Y,  \nabla_Z X \rangle + \langle Y, [X,Z]\rangle , \\
    Y \langle Z, X \rangle &= \langle \nabla_Y Z, X \rangle + \langle Z,  \nabla_X Y \rangle + \langle Z, [Y,X]\rangle , \\
      Z \langle X, Y \rangle &= \langle \nabla_Z X, Y \rangle + \langle X,  \nabla_Y Z \rangle + \langle X, [Z,Y]\rangle .
\end{align*}
Adding the first two equations, subtracting the third and solving for $\langle \nabla_X Y, Z \rangle$, we arrive at
$$
\langle \nabla_X Y, Z \rangle = \frac{1}{2}\left( X \langle Y, Z \rangle + Y \langle Z, X \rangle - Z \langle X, Y \rangle - \langle Y, [X,Z]\rangle - \langle Z, [Y,X]\rangle + \langle X, [Z,Y]\rangle \right).
$$
The point here is that the right hand side of this equation does not depend on $\nabla$ and the lowering of indices is an isomorphism (in particular, it is injective). Thus, for another metric and symmetric connection $\tilde{\nabla}$, the above equation yields
$$
(\tilde{\nabla}_X Y)^\flat = (\nabla_X Y)^\flat
$$
and ergo, both connections are identical. Now as in the classical case, in order to prove existence, we use our equation obtained above rewritten in an analytic coordinate chart $(x^i)$. Since the Lie brackets of the coordinate vector fields $\del_i$ are zero, we obtain 
$$
\langle \nabla_{\del_i} \del_j, \del_l \rangle = \frac{1}{2}\left( \del_i \langle \del_j, \del_l \rangle + \del_j \langle \del_l, \del_i\rangle - \del_l \langle \del_i, \del_j\rangle \right).
$$
By the definitions of the $\godd_{ij}$ and the $\Gamma_{ij}^k$, this yields
$$
\Gamma_{ij}^m \godd_{ml} = \frac{1}{2}(\del_i\godd_{jl}+ \del_j \godd_{li} + \del_l \godd_{ij}).
$$
Multiplying with the inverse $\godd^{kl}$ and noting that $\godd_{ml}\godd^{kl} = \delta_m^k$, we get the formulae
$$
\Gamma_{ij}^k = \frac{1}{2}\godd^{kl}(\del_i\godd_{jl}+ \del_j \godd_{li} + \del_l \godd_{ij}),
$$
which are meromorphic functions and define an obviously symmetric algebroid connection in the chart. Analogously to the classical case, it can be shown that the connection also is metric. Uniqueness means that all metrics defined locally in this way in the charts agree on the overlapping subsets.
\end{proof}

As usual, we cannot assume an ODD orthonormal frame $(E_i)_i$ to be holonomic. In particular, the Lie brackets $[E_i,E_j]$ will not vanish in general. Thus, in such an ODD orthonormal frame, the formulae for the $\Gamma_{ij}^k$ are given by
$$
\Gamma_{ij}^k = \frac{1}{2}(c_{ij}^k-c_{ik}^j-c_{jk}^i),
$$
where the $c_{ij}^k$ are the coefficients in the expansion
$$
[E_i,E_j]=c_{ij}^k E_k
$$
of the Lie bracket. 

\begin{remark}
\label{rem:ODDLV-conn-locality}
For ODD vector fields $X,Y$ on an ODD Riemannian manifold and a local orthonormal frame $(E_i)$, obviously the value of $\nabla_X Y$ at $p \in M$ only depends on the values $X^i(p)$ of the coefficient functions, where $X=X^i E_i$. So in a sense, this locality is the same as in the classical case. However, if $E_i$ has a pole at $p$, of course this introduces some 'nonlocality', which is encoded in the frame itself.
\end{remark}

What also carries over from the classical setting directly is the naturality of the Levi-Civita connection: if $(M,\godd)$ and $(\tilde{M},\tilde{\godd})$ are two ODD Riemannian manifolds with associated Levi-Civita connections $\nabla$ and $\tilde{\nabla}$ respectively, and $\varphi \colon M \to \tilde{M}$ is an \emph{isometry} - which ultimately means that $\godd$ and $\varphi^*\tilde{\godd}$ agree on algebroid vector fields $X,Y \in \Alg(TM)$ -, then the pullback of $\tilde{\nabla}$ is equal to $\nabla$. The proof of~\cite[Prop.~5.13]{Lee18} applies verbatim.  

\begin{remark}
{\em
From the formulae of the \emph{ODD Christoffel symbols} $\Gamma^k_{ij}$, we see that the ODD Levi-Civita connection ``adds" zeros and poles only where $\godd$ degenerates, which one might already have expected.  This is the reason why we are able to define geodesics in a meaningful way, which we will do in the next subsection. However, we remark that the covariant derivatives of ODD vector fields may not be ODD vector fields anymore, as the following example shows.
}
\end{remark}

\begin{example}
 We consider $M:=\RR$ together with the ODD metric $\godd:= x^2\mathrm{dx}^{\odot 2}$ as in Example~\ref{ex:RegularCurveODDvsClass}. We have seen that $(E=\frac{\del_x}{|x|})$ is a global orthonormal frame and in particular equals $2  \dot{\gamma}$ for the ODD regular curve $\gamma$. In the standard coordinate frame $(\del_x)$, we compute
 $$
 \Gamma^1_{11}=\frac{1}{2x^2}\left( \frac{\del x^2}{\del x} + \frac{\del x^2}{\del x} - \frac{\del x^2}{\del x}\right)=\frac{1}{x}.
 $$
 So we have
\begin{align*}
    \nabla_{\del_x} \del_x &= \frac{1}{x} \del_x,
\\
 \nabla_{E} \del_x &= \frac{1}{|x|x} \del_x,
 \\
 \nabla_{\del_x} E &= \frac{1}{|x|} \nabla_{\del_x} \del_x + \frac{\del (1/|x|)}{\del x} \del_x = \frac{1}{|x|x} \del_x - \frac{1}{|x|x} \del_x =0,
 \\
 \nabla_{E} E &= \frac{1}{|x|} \nabla_{\del_x} E =0.
\end{align*}
In particular, $\nabla_{\del_x} \del_x$ and $\nabla_{E} \del_x$ are algebroid vector fields that are not ODD. 
\end{example}

\subsection{Covariant Derivatives along Curves}

First, we have to define the right notion of vector fields along ODD regular curves. We have already seen an account of this in Definition~\ref{def:ODD-regular-curve}.

\begin{definition}
\label{def:ODD-vf-along-curve}
Let $\gamma \colon I \to M$ be an ODD regular curve in an ODD Riemannian manifold $(M,\godd)$. Let $(E_i)_{1\leq i \leq n}$ be a local ODD orthonormal frame. We say that $E_i$ \emph{has no pole along $\gamma$}, if $\left.E_i\right|_{\gamma(I)}$ is well-defined for all but finitely many points of $\gamma(I)$. We denote 
$$
J:=\{1 \leq i \leq n ~|~ E_i \mathrm{~has~no~pole~along~} \gamma\}.
$$
An \emph{algebroid vector field along $\gamma$} is a map $V \colon I \to T_{\gamma(t)}M$ (defined on all but finitely many points of $I$), such that there are algebroid functions $V^i \in \Alg(I)$ satisfying
$$
V(t)=\sum_{i \in J} V^i(t) \left.E_i\right|_{\gamma(t)}.
$$
An algebroid vector field $V$ along $\gamma$ is called an \emph{ODD vector field along $\gamma$}, if the $V^i$ are analytic functions. We denote the space of ODD vector fields along $\gamma$ by $\mathfrak{X}(\gamma)$.
\end{definition}

\begin{remark}
{\em
We remark that if $\gamma$ lies in some degeneracy locus $N_i$, there may be classical vector fields along $\gamma$ orthogonal to $N_i$, that by the above definition are not ODD vector fields along $\gamma$ - because they do not lie in the span of $(E_j)_{j \in J}$. However, at least the tangent vector field $\dot{\gamma}$ is always an ODD vector field along $\gamma$.
}
\end{remark}

When $\gamma \colon I \to M$ is an ODD regular curve, then choosing ODD orthonormal frames $(E_i)_i$ instead of coordinate frames, we can show the existence of a covariant derivative $D_t$ along $\gamma$ along the same lines as in the classical case, cf.~\cite[Thm.~4.24]{Lee18}. Moreover, we see that the values of $\nabla_v Y$, where $v$ is some coefficient vector with respect to $(E_i)_i$ at $p \in M$, only \emph{depend on the values of $Y$ along any ODD regular curve with velocity $v$ at $p$}, compare~\cite[Prop.~4.26]{Lee18}.

\subsection{Integrability of ODD vector fields}
\label{sec:int-vf}
In this subsection, we aim to study how a meaningful integrability of ODD vector fields could look like. The first observation is that in general, \emph{integral curves of ODD vector fields are not unique}, as the following example shows.  

\begin{example}
\label{ex:nonuniqueintegralcurves}
We consider $M:=\RR^2$ with the ODD metric 
$$
\godd:=(x^2+y^2)(\mathrm{dx}^{\odot 2} + \mathrm{dy}^{\odot 2})+2(y^2-x^2)\mathrm{dx} \odot \mathrm{dy},
$$
which degenerates along the coordinate axes. An ODD orthonormal frame is given by
$$
E_1:=\frac{1}{\sqrt{x^2+y^2}} \del_x,
\qquad
E_2:=\frac{(x^2-y^2)\del_x+(x^2+y^2)\del_y}{2 \sqrt{x^2y^2(x^2+y^2)}}.
$$

First, we consider the ODD vector field $E_2$. 
The rescaled field 
$$
E_2':=2\sqrt{\frac{x^2y^2}{x^2+y^2}}E_2=\del_y+\frac{x^2-y^2}{x^2+y^2} \del_x
$$ 
is defined and nonvanishing away from the origin. In particular, it has well-defined integral curves through all points of the coordinate axes apart from the origin. Since $E_2'$ is a rescaling of $E_2$, reparametrizing these integral curves yields ODD integral curves of $E_2$. It remains the origin. 

We can check that there is in fact one integral line $y=ax$ of $E_2$ through the origin, since the equation $a^3+a^2-a+1$ has one real root. The picture below shows the (normalized) vector field $E_2'$ together with the integral line $y=ax$ through the origin in red.

\begin{center}
\begin{tikzpicture}
\begin{axis}[
    xmin = -5, xmax = 5,
    ymin = -5, ymax = 5,
    zmin = 0, zmax = 1,
    axis equal image,
    view = {0}{90},
    ticks=none
]
\draw[color=red]   plot (\x,\x * -1.8393,0);

    \addplot3[
        quiver = {
            u = {(x^2-y^2)/(3*sqrt(x^4+y^4))},
            v = {(x^2+y^2)/(3*sqrt(x^4+y^4))},
        },
        -stealth,
        domain = -5:5,
        domain y = -5:5,
    ] {0};
\end{axis}
\end{tikzpicture}
\end{center}

It is easy to check that there are no other integral curves through the origin. So, in the case of $E_2$, we see that ODD integral curves exist and are unique (the same holds for $E_1$, with an easier argument). 
Now, consider the ODD vector field
$$
E_1+E_2= \frac{2\sqrt{x^2y^2} + x^2-y^2}{2 \sqrt{x^2y^2(x^2+y^2)}} \del_x + \frac{(x^2+y^2)}{2 \sqrt{x^2y^2(x^2+y^2)}} \del_y.
$$
The same scaling argument as before provides us with unique ODD integral curves through all points apart from the origin. However, now there are \emph{two} ODD integral lines $y=ax$ through the origin, with $a$ the two real solutions to
$$
2a\sqrt{a^2}+a-a^3-a^2-1=0.
$$
In the picture below, the (normalized) ODD vector field $E_1+E_2$ together with these two lines is depicted. Moreover, all ODD integral curves through points in the area bordered by these two lines go through the origin as well.

\begin{center}
\begin{tikzpicture}
\begin{axis}[
    xmin = -5, xmax = 5,
    ymin = -5, ymax = 5,
    zmin = 0, zmax = 1,
    axis equal image,
    view = {0}{90},
    ticks=none
]
\draw[color=red]   plot (\x,\x * -3.3830,0);
\draw[color=red]   plot (\x,\x ,0);

    \addplot3[
        quiver = {
            u = {(2*sqrt(x^2*y^2)+x^2-y^2)/(2*sqrt(2*x^4+2*y^4+4*x^2*y^2+4*x^2*sqrt(x^2*y^2)-4*y^2*sqrt(x^2*y^2)))},
            v = {(x^2+y^2)/(2*sqrt(2*x^4+2*y^4+4*x^2*y^2+4*x^2*sqrt(x^2*y^2)-4*y^2*sqrt(x^2*y^2)))},
        },
        -stealth,
        domain = -5:5,
        domain y = -5:5,
    ] {0};
\end{axis}
\end{tikzpicture}
\end{center}

Of course, in general the `border cases' will not always be lines.

\end{example}

However, as the following statement shows, integral curves exist and are unique at least at general points of maximal $N_j$ (as of course they do for general points of $M$).

\begin{theorem}[Integrability of ODD vector fields at general points of maximal components of $\mathcal{D}$]
\label{thm:integrability-odd-vf}
Let $(M,\godd)$ be an ODD Riemannian manifold and $p \in M$. Let $X$ be an ODD vector field. If either
\begin{enumerate}
    \item $p$ is a general point of $M$, or
    \item $p$ is a general point of some $N_j \subseteq \mathcal{D}$, which is maximal in the sense that it is not contained in any other $N_k$,
\end{enumerate}
then, there exists $ \epsilon>0$ and a unique ODD integral curve $\gamma\colon (-\epsilon,\epsilon) \to M$ with $p(0)=p$.
\end{theorem}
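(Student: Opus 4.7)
Case (1) is immediate: at a general point $p$ of $M$, the metric $\godd$ is nondegenerate on a neighborhood $U$, so any ODD orthonormal frame on $U$ is a classical analytic frame, $X$ is a classical analytic vector field, and the Picard--Lindel\"of theorem for analytic ODEs produces the unique analytic integral curve through $p$, which is automatically ODD regular.

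For case (2), I would choose slice coordinates $(x^1, \ldots, x^n)$ around $p$ with $N_j = \{x^1 = \cdots = x^k = 0\}$, where $k = \codim N_j$. The assumptions that $N_j$ is maximal at $p$ and that $p$ is general on $N_j$ ensure that the only degeneracy of $\godd$ near $p$ lies along $N_j$ and that $\godd|_{T N_j}$ is nondegenerate at $p$. Using the analytic implicit function theorem and positive semidefiniteness, I would first reduce $\godd$ to the block form
\begin{equation*}
\godd = \sum_{a, b \leq k} g^\perp_{ab}(x)\,dx^a\,dx^b + \sum_{\alpha, \beta > k} g^\parallel_{\alpha\beta}(x)\,dx^\alpha\,dx^\beta,
\end{equation*}
with $g^\parallel$ analytic and positive definite, and $g^\perp$ analytic, positive semidefinite, and vanishing precisely on $N_j$. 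A further analytic normalization of the normal coordinates---expected to be possible at a general point of the maximal stratum via a Weierstrass-type preparation compatible with the simple normal crossings hypothesis of Definition~\ref{def:ODD-metric}(4)---should diagonalize $g^\perp$ as $g^\perp_{aa}(x) = (x^a)^{2 m_a}\,h_a(x)$ with integers $m_a \geq 1$ and positive analytic $h_a$. In the corresponding adapted ODD orthonormal frame $E_a = \frac{1}{|x^a|^{m_a}\sqrt{h_a}}\,\del_a$ for $a \leq k$ together with a classical analytic frame $(E_\alpha)_{\alpha > k}$ for $g^\parallel$, writing $X = X^a E_a + X^\beta E_\beta$ with analytic coefficients, the integral curve equation $\dot{\gamma} = X \circ \gamma$ takes the form
\begin{align*}
\dot{\gamma}^a &= \frac{X^a(\gamma)}{|\gamma^a|^{m_a}\sqrt{h_a(\gamma)}} \qquad (a \leq k), \\
\dot{\gamma}^\alpha &= \sum_{\beta > k} X^\beta(\gamma)\, A^\alpha_\beta(\gamma) \qquad (\alpha > k),
\end{align*}
a system singular along $N_j$ in the normal directions.

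To resolve this singular system, I would make the algebroid ansatz $\gamma^a(t) = \mathrm{sgn}(t)\,|t|^{1/(m_a+1)}\,c^a(t)$ with analytic $c^a$, keep $\gamma^\alpha$ analytic, and rewrite the normal equations, after clearing the fractional powers, as the Briot--Bouquet type regular singular ODE
\begin{equation*}
t\,\dot{c}^a + \tfrac{1}{m_a+1}\, c^a = \frac{X^a(\gamma)}{(c^a)^{m_a}\sqrt{h_a(\gamma)}},
\end{equation*}
coupled with the analytic equations for $\gamma^\alpha$. The initial value $c^a(0)$ is then forced by the indicial equation $(c^a(0))^{m_a+1} = (m_a+1)\, X^a(0, p^\alpha)/\sqrt{h_a(0, p^\alpha)}$, with the real branch selected according to the sign of $X^a(p)$; when $X^a(p) = 0$, the indicial equation gives $c^a \equiv 0$, so $\gamma^a \equiv 0$ and the problem reduces inductively to an ODD integral curve in the lower-dimensional manifold $N_j$ by condition~(3) of Definition~\ref{def:ODD-metric}. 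A Briot--Bouquet existence and uniqueness argument (or equivalently a Frobenius-type fixed-point iteration on convergent series in the auxiliary parameter $s = t^{1/L}$, $L = \mathrm{lcm}(m_a+1)$) should then furnish a unique analytic $(c^a, \gamma^\alpha)$ on some interval $(-\epsilon, \epsilon)$, and inverting the substitution yields the ODD integral curve $\gamma$; its ODD regularity follows because the coefficients of $\dot{\gamma}$ in the ODD frame are $X^i(\gamma(t))$, which are analytic in $t$ by construction.

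The main obstacle is twofold. First, the analytic diagonalization of $g^\perp$ in codimension $k \geq 2$ is delicate, since a family of positive semidefinite matrices vanishing on a simple normal crossings divisor need not be simultaneously diagonalizable by an analytic coordinate change; handling this cleanly should require a careful combination of Weierstrass preparation with the SNC structure. Second, verifying the non-resonance condition for the Briot--Bouquet step---together with the observation that the right-hand side of the $c^a$-equation is genuinely analytic only after passing to the Puiseux parameter $s$---requires some care; this is precisely where generality of $p$ on the maximal stratum is used, since it guarantees a single well-defined vanishing order $m_a$ in each normal direction and thus prevents the Puiseux branching responsible for the nonuniqueness observed at the stratum intersection in Example~\ref{ex:nonuniqueintegralcurves}.
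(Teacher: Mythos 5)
Your case (1) coincides with the paper's. In case (2), however, the pivot of your argument --- the block splitting followed by the monomial diagonalization $g^\perp_{aa}(x)=(x^a)^{2m_a}h_a(x)$ with $h_a>0$ analytic --- is not merely ``delicate'': it is false whenever $\codim N_j=k\geq 2$, even at a general point of a maximal stratum. The degeneracy of $\godd$ occurs only on $N_j$, a codimension-$k$ submanifold, so the offending coefficients vanish on a codimension-$k$ set, not on the union of hyperplanes $\{x^a=0\}$; no analytic change of normal coordinates can convert the one into the other. The paper's own first example already exhibits this: for $\godd=\mathrm{dx}^{\odot 2}+\mathrm{dy}^{\odot 2}+(x^2+z^2)(y^2+z^2)\,\mathrm{dz}^{\odot 2}$ near a general point of the $x$-axis ($N=\{y=z=0\}$, $k=2$), the normal coefficient $(x^2+z^2)(y^2+z^2)$ vanishes exactly on $N$ and admits no factorization $z^{2m}h$ with $h$ positive. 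Consequently the Puiseux ansatz $\gamma^a(t)=\mathrm{sgn}(t)|t|^{1/(m_a+1)}c^a(t)$ and the Briot--Bouquet/indicial analysis built on it have no footing in general: the rate at which the ODD frame coefficients blow up along the sought curve depends on the (unknown) direction of approach to $N_j$, not on coordinatewise vanishing orders, so there is no a priori exponent $1/(m_a+1)$ to plug in. Within your own scheme there is a further slip: when $X^a(p)=0$ the indicial equation only forces $c^a(0)=0$, not $c^a\equiv 0$, so the asserted reduction to an integral curve inside $N_j$ does not follow. (For $k=1$ your normal form is correct at a general point by Weierstrass preparation, so your argument is plausibly salvageable in that case, but the theorem covers all codimensions.)

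The paper avoids any normal form for $\godd$. It writes the integral-curve system in coordinates with right-hand side $X^j(x)E_j^i(x)$, which is algebroid with poles only along $N_j$, multiplies through by a common denominator $h$ vanishing on $N_j$ so that the modified system is analytic and solvable by Picard--Lindel\"of, rules out that the resulting solution $y$ stays inside $N_j$ (otherwise $X$ would be tangent to $N_j$ with a pole along $y$, contradicting analyticity of $\godd(X,X)$ for an ODD field), and then recovers the genuine integral curve as $x(t)=y(k(t))$, where $k$ is obtained by inverting the monotone function $t(k)$ with $dt/dk=h(y(k))$. This desingularize-then-reparametrize step is exactly the uniform substitute, valid in every codimension, for the preparation and diagonalization your proposal relies on; if you want to keep your framework, you would need to replace the monomial normal form by an argument of this kind rather than refine the Weierstrass step.
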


\begin{proof}
If $X$ has a zero at $p$, the constant curve $\gamma = p$ does the job, so we can assume that locally around $p$, $X$ has no zeros. Moreover, Item (1) follows from the classical Riemannian theory, since $\godd$ is Riemannian on a neighbourhood of a general point.

It remains to prove Item (2). We choose a coordinate neighbourhood $(x^i)_i$ centered at $p=0$ with associated coordinate frame $(\del_i)_i$, such that locally, $N_j$ and therefore $\mathcal{D}$ is given by the vanishing of the coordinates $x_{1},\ldots,x_{r}$ for some $r \leq n$.

We let $(E_i)_i$ be an ODD orthonormal frame defined on the same neighbourhood of $p$. Then 
$
X=X^i E_i
$
for some analytic functions $X^i$ by the definition of an ODD vector field. As in the remark above, we denote by $E_i^j$ the coefficient of $\del_j$ in the expansion of $E_i$ in the basis $(\del_j)_j$. In particular, $E_i^j$ is a continuous algebroid function, such that $(E_i^j)^2$ is meromorphic and has poles only at $N_j$. More concretely, when $(E_i)_i$ arises from $(\del_i)_i$ by the Gram-Schmidt orthogonalization process (which we will assume from now on), then we have
$$
E_i^j=\frac{(-1)^{\delta_{ij}+1}\frac{\godd_{ij}}{\godd_{jj}}}{\left| \del_i-\sum_{k=1}^{i-1} \frac{\godd_{ik}}{\godd_{kk}}\del_k\right|_\godd}.
$$

Thus, the existence of an integral curve through $p$ is equivalent to a local solution of the first order system
\begin{align*}
    \dot{x}^i(t) &= X^j(x(t)) E_j^i(x(t)), \\
     x^i(0) &= 0.
\end{align*}
We cannot use the Picard-Lindelöf theorem directly to obtain a local solution.
Instead, we multiply with an appropriate common denominator $h(x)$ of the $X^j(x(t)) E_j^i(x(t))$, which has zeros along $N_j$. Now we investigate the modified system 
\begin{align*}
    \dot{y}^i(t) &= X^j(y(t)) E_j^i(y(t)) h(y(t)), \\
     y^i(0) &= 0,
\end{align*}
which has a positive well-defined algebroid right hand side, which means the right hand side is analytic. So by the Picard-Lindelöf theorem, there is an analytic integral curve $y:I \to M$ with $y(0)=p$ solving the system. 

If $h(y(t))$ was constant zero on $I$, then $y$ would lie in $N_j$. But this means that $X$ would be tangent to $N_i$ and have a pole along $y(I)$. But then by the definition of an ODD metric, $\left.\godd\right|_{y(I)}(X,X)$ can not be an analytic function, which is a contradiction.

So we can assume $f(t):=h(y(t))$ is not constant equal to zero and has the only zero at zero (by choosing the interval of definition $I$  small enough). We consider the order one differential equation
\begin{align*}
    \frac{dk}{dt} &= \frac{1}{f(k)} \\
     k(0) &= 0.
\end{align*}
We see that we can solve the inverse system $\frac{dt}{dk}=f(k)$ by simple integration. Now the point is that we know that the first derivative $f(k)$ of the solution $t(k)$ is semipositive and has its only zero at zero. In particular, $t(k)$ is strictly monotonic and has a continuous inverse $k(t)$ around zero with an infinite slope $\frac{dk}{dt}(0)= \infty$ at zero. Now, we let $x(t):=y(k(t))$. With this definition, $x$ solves our initial first order system, as the following computation shows:
\begin{align*}
    \dot{x}^i(t) &=  \frac{d y^i(k(t))}{dt} = \dot{k}(t) \dot{y}^i(k(t)) = \frac{1}{f(k(t))} X^j(y(k(t))) E_j^i(y(k(t))) h(y(k(t))) = X^j(x(t)) E_j^i(x(t)),
    \\
    x(0) &= y(k(0)) = y(0) = 0.
\end{align*}
That is, $x$ is an ODD integral curve through $p$ with $\dot{x}(t)=\left. X \right|_{x(t)}$. 

\end{proof}

As already mentioned in the introduction, we believe that integral curves exist (though might be non-unique as Example~\ref{ex:nonuniqueintegralcurves} shows) also in more special points (i.e. special points that are not general points of maximal $N_j$). We also describe in the following conjecture how in a certain sense, still uniqueness holds and we at least give a sketch of how a possible proof should work.

\begin{conjecture}
\label{conj:int-vf}
Let $(M,\godd)$ be an ODD Riemannian manifold, $p \in M$, and $X$ an ODD vector field without zeros. There exists $ \epsilon>0$ and \emph{at least one} ODD regular integral curve $\gamma\colon (-\epsilon,\epsilon) \to M$ with $p(0)=p$. 

These curves are unique in the following sense:
If $\gamma$ and $\mu$ are two integral curves through $p$ and $\epsilon$ is chosen small enough such that $\gamma, \mu$ are regular curves away from $p$, then
$$
\lim_{t \to 0} \frac{\dot{\gamma}(t)}{|\dot{\gamma}(t)|_\godd} \neq \lim_{t \to 0} \frac{\dot{\mu}(t)}{|\dot{\mu}(t)|_\godd}.
$$
\end{conjecture}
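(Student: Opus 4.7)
The plan is to extend the proof of Theorem~\ref{thm:integrability-odd-vf} by allowing $p$ to lie in an arbitrary intersection stratum of $\mathcal{D}$. Using Condition~(4) of Definition~\ref{def:ODD-metric}, I would first fix local coordinates $(x^i)$ centered at $p$ in which every component $N_j$ passing through $p$ is a coordinate subspace. Building an ODD orthonormal frame $(E_i)$ by Gram--Schmidt from $(\partial_i)$, each coefficient $E_j^i$ is an algebroid function with poles only along the $N_j$. I would then choose a real-analytic multiplier $h(x)$ formed from suitable non-negative powers of the defining monomials of all the $N_j$ through $p$, taken large enough to clear every denominator and square-root appearing in the product $X^j(x) E_j^i(x)$ simultaneously.

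With this setup, Picard--Lindel\"of applied to the modified analytic system
$$
\dot{y}^i = h(y)\, X^j(y)\, E_j^i(y), \qquad y(0)=0,
$$
produces an analytic solution $y\colon I \to M$. Reparametrizing exactly as in Theorem~\ref{thm:integrability-odd-vf} by setting $f(s) := h(y(s))$ and taking the inverse $k(t)$ of $t(k) = \int_0^k f(s)\, ds$ yields a candidate ODD integral curve $x(t) := y(k(t))$. The decisive step is verifying $f \not\equiv 0$: if $f$ vanished identically on some subinterval, $y$ would lie in a stratum $S \subseteq \mathcal{D}$; by Condition~(3), $\godd$ restricts to an ODD metric on $S$, and $\godd(X,X)$ along $y$ could only be analytic if the tangential component of $X$ along $S$ were itself a zero-free ODD vector field on $(S, \left.\godd\right|_S)$. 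One then argues by induction on $\dim S$ that either $f$ is non-trivial or an integral curve is already produced on the lower-dimensional stratum.

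For the uniqueness of initial directions, I would observe that every ODD integral curve through $p$ is of the form $x(t) = y(k(t))$ for some analytic solution $y$ of the modified system, since the inverse reparametrization $t \mapsto k$ is itself algebroid. Two distinct analytic solutions $y_1, y_2$ with $y_1(0)=y_2(0)=0$ differ to some finite order at $t=0$; provided the leading orders of $\dot{y}_1$ and $\dot{y}_2$ are non-parallel, a direct expansion shows that the induced unit tangent limits $v_{\gamma_i} := \lim_{t \to 0} \dot{\gamma}_i(t)/|\dot{\gamma}_i(t)|_\godd$ are distinct as elements of the unit sphere for any auxiliary Riemannian metric near $p$. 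In the degenerate case where the leading orders are parallel, one would iterate the Gram--Schmidt / multiplier construction on the difference $y_1 - y_2$ to show that the two solutions must coincide identically, reducing uniqueness of the direction to the analytic uniqueness of the modified system.

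The hard part is unquestionably the induction underpinning the non-triviality of $f$: when $p$ lies in the intersection of several $N_j$, the Gram--Schmidt construction mixes frames across the intersecting components, and the tangential component of $X$ along a deeper stratum may require a careful re-interpretation before it is recognizably an ODD vector field for the induced metric. Overcoming this likely requires either a log-resolution or iterated blow-up that separates the components of $\mathcal{D}$ and allows the existence and uniqueness statements to be pulled back from a resolved manifold where the algebroid branches of $X$ are disentangled. Controlling the interplay between the SNC combinatorics of $\mathcal{D}$ at $p$ and the branching of the algebroid right-hand side of the ODE is, I expect, where the bulk of the new technical work must occur.
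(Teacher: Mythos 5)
Your proposal attempts to push the proof of Theorem~\ref{thm:integrability-odd-vf} verbatim to arbitrary strata of $\mathcal{D}$, whereas the paper's own (sketched) argument deliberately abandons that route and replaces it with a topological one: an index-zero-singularity/continuity argument identifying integral curves of the modified system that merely have $p$ in their \emph{closure}, followed by a perturbation $X+\epsilon$ to separate limiting tangent directions. The reason the paper switches strategies is exactly the point your proposal glosses over, and it is a genuine gap. At a general point of a \emph{maximal} $N_j$ the multiplied right-hand side $h\,X^jE_j^i$ is nonzero at $p$ (the zeros of $h$ cancel the poles of the frame exactly), so Picard--Lindel\"of produces a nonconstant $y$ and the contradiction ``$y\subseteq N_j$ forces $X$ tangent with a pole'' is available. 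At a deeper stratum, however, the common denominator $h$ must clear poles coming from several intersecting components, and the product $h\,X^jE_j^i$ will in general \emph{vanish} at $p$ --- the paper says this explicitly (``the resulting modified system could have a zero at $p$''). Once the analytic right-hand side vanishes at $p$, the unique solution with $y(0)=p$ is the constant curve $y\equiv p$; then $f=h\circ y\equiv 0$ not because $y$ lies in a stratum to which $X$ is tangent, but trivially, and your reparametrization $x=y\circ k$ produces nothing. Your proposed induction on $\dim S$ does not repair this: the constant solution gives no information about the tangential component of $X$ along any stratum, so the dichotomy you invoke (``either $f$ is non-trivial or an integral curve is produced on the lower-dimensional stratum'') is not established.

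The uniqueness half inherits the same defect. You assume every ODD integral curve through $p$ has the form $y(k(t))$ for an analytic solution $y$ of the modified system with $y(0)=0$; but in the situation the conjecture is actually about, the relevant solutions of the modified system never reach $p$ at finite time --- they only accumulate at $p$ --- and Example~\ref{ex:nonuniqueintegralcurves} shows there can be a whole two-parameter family of such curves (all ODD integral curves entering the sector bounded by the two red lines pass through the origin). A ``difference of leading orders'' expansion of two analytic solutions based at $p$ therefore does not see these curves at all. The paper's sketch addresses precisely this by parametrizing the accumulating curves by their entry points $S\subseteq\del B$ and arguing (via the perturbed systems $X+\epsilon$) that distinct entry points yield distinct limiting unit tangents $\lim_{t\to 0}\dot\gamma(t)/|\dot\gamma(t)|_\godd$. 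Your closing remark that a log-resolution separating the components of $\mathcal{D}$ may be needed is a reasonable alternative idea, but as stated it is a direction, not an argument; to make your approach viable you would need either that resolution step carried out, or a replacement for Picard--Lindel\"of that handles analytic systems with an equilibrium at the initial point and extracts the nonconstant orbits accumulating there --- which is in substance what the paper's index/continuity sketch is trying to do.
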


One could paraphrase the last statement by saying that for every classical tangent direction at $p$, there is at most one integral curve of $X$ going through $p$ in that direction. Also note that the limites are well-defined since $\gamma$ and $\mu$ are \emph{regular} ODD integral curves (cf. Definition~\ref{def:ODD-regular-curve} (2)).

\begin{proof}[Sketch of Proof of Conjecture~\ref{conj:int-vf}]
As in the proof of Theorem~\ref{thm:integrability-odd-vf}, we can assume that the existence of an integral curve amounts to a solution of the first order system
\begin{align*}
    \dot{x}^i(t) &= X^j(x(t)) E_j^i(x(t)), \\
     x^i(0) &= 0.
\end{align*}
If now we multiply by the least common multiple $h$ of the denominators of the $X^j(x(t)) E_j^i(x(t))$, then the resulting modified system 
\begin{align*}
    \dot{y}^i(t) &= X^j(y(t)) E_j^i(y(t)) h(y(t)), \\
     y^i(0) &= 0,
\end{align*}
could have a zero at $p$. This means that integral curves w.r.t. this modified system that have $p$ in their closure would correspond to ODD integral curves for $X$ through $p$. Of course there can be more than one such curve or a priori none. We propose the following strategy:

\begin{itemize}
    \item To address the existence of curves with $p$ in their closure, we note that since $X$ has no zeros, the modified system \emph{will have an index zero singularity} at $p$. From this, one should be able to prove that due to continuity reasons there is at least one integral curve with $p$ in it's closure.  We also remark that by an appropriate induction on the codimension of $N_j$ where $p$ is a general point, one should be able to assume that away from $N_j$, locally we have integral curves through all points. 
    
    To be more precise, taking an appropriate transversal submanifold $N$ through $p$ and a small (maybe appropriately deformed) ball  $B_\epsilon(p)$, there are integral curves going through all points of $N \cap B_\epsilon(p)$ apart from $p$ in one direction entering $B_\epsilon(p)$ somewhere else. Due to continuity, the set of entry points of these curves must be of the same homotopy type, i.e. have a hole $S$ of codimension $n-1$. Curves entering `through that hole' will have $p$ in their closure. The picture below illustrates this approach.
    
    \begin{center}
    \begin{tikzpicture}[domain=-2:2]

\foreach \eps in {0.05, 0.1, ..., 2}
   \draw[ color=blue, domain=-2+\eps-0.1:0,  smooth,variable=\x]  plot(\x,\eps*\x*\x+\eps);
   \foreach \eps in {0, 0.01, ..., 0.04}
   \draw[color=black, domain=-2+\eps-0.1:0,  smooth,variable=\x]    plot(\x,\eps*\x*\x+\eps);
   \foreach \eps in {0.01, 0.02, ..., 0.05}
   \draw[color=black, domain=-2+\eps-0.1:0,  smooth,variable=\x]    plot(\x,-\eps*\x*\x-\eps);
   \foreach \eps in {0.05, 0.1, ..., 2}
   \draw[ color=blue, domain=-2+\eps-0.1:0,  smooth,variable=\x]    plot(\x,-\eps*\x*\x-\eps);

  \draw[fill=white,draw=white] (0,2.5) -- (0,2) arc (90:270:2) -- (0,-2.5)--(-2.2,-2.5)--(-2.2,2.5) -- cycle;
  
  \foreach \eps in {-2,-1.8,...,-0.2}
   \draw[ color=black,->,  domain=0:0.2,  smooth,variable=\x]    plot(\x,\eps);
   
   \foreach \eps in {0.2,0.4,...,2}
   \draw[ color=black,->,  domain=0:0.2,  smooth,variable=\x]    plot(\x,\eps);
  
  \coordinate[label=90:$N$] (X) at (0,2.5);
  \coordinate(P) at (0,0);
  \coordinate[label=0:$p$] (P') at (0.1,0);
\fill (P) circle (2pt);
\draw (0,-2.5) -- (0,2.5);
\draw (0,0) circle [radius=2cm];

\coordinate[label=-180:$S$] (S) at (-2,0);
\draw [line width=2pt,domain=174:186] plot ({2*cos(\x)}, {2*sin(\x)});
   
  \end{tikzpicture}
      \end{center}

    \item What the above approach can not prove is that: 
    \begin{enumerate}
        \item  reparametrizing these curves with $p$ in their closure yields \emph{ODD regular curves} $\gamma$, i.e. with 
    $$
        \lim_{t \nearrow 0} \frac{\dot{\gamma}(t)}{|\dot{\gamma}(t)|_\godd} = \lim_{t \searrow 0} \frac{\dot{\gamma}(t)}{|\dot{\gamma}(t)|_\godd},
    $$
    \item and  two \emph{different} such reparametrized curves $\gamma$ and $\mu$ through $p$ have \emph{different} tangential directions:
    $$
\lim_{t \to 0} \frac{\dot{\gamma}(t)}{|\dot{\gamma}(t)|_\godd} \neq \lim_{t \to 0} \frac{\dot{\mu}(t)}{|\dot{\mu}(t)|_\godd}.
$$
    \end{enumerate}
   
   To study this, we propose to investigate the behaviour of small perturbations
   \begin{align*}
    \dot{y}^i(t) &= X^j(y(t)) E_j^i(y(t)) h(y(t)) + \epsilon, \\
     y^i(0) &= 0,
\end{align*}
 for $\epsilon \searrow 0$. In particular, these perturbated systems have no singularities. Again, choose a small ball $B:=B_\epsilon(p)$ around $p$ and let $S$ be the (closed) set of points on the boundary $\del B$ such that for $s \in S$, $p$ lies in the closure of the integral curve (w.r.t. our initial system) through $s$. Now let again $N$ be an appropriate submanifold through $p$ transversal to the initial system. Denote $P_\epsilon$ the intersection of $N$ with integral curves of $\gamma$ through points in $S$ and $R_\epsilon$ the set of points where integral curves through points in $S$ leave $B$.
 
 For every $\epsilon>0$ there are bijections 
 $$
 \phi_\epsilon \colon S \to P_\epsilon, \quad
 \psi_\epsilon \colon S \to R_\epsilon,
 $$ 
 and for every $s \in S$, we have $\lim_{\epsilon \searrow 0} \phi_\epsilon (s) =p$. Now based on the fact that the singularity $p$ arises from a singularity free vector field in the ODD orthonormal frame, it should be possible to show that for $s\neq s' \in S$ and $\gamma_{s,\epsilon},\gamma_{s',\epsilon}$ integral curves of the system perturbed by $\epsilon$ starting at $s$ and $s'$ respectively, translated such that $\gamma_{s,\epsilon}(0)=\phi_\epsilon(s)$ and $\gamma_{s',\epsilon}(0)=\phi_\epsilon(s')$, we get
 $$
 \lim_{\epsilon \to 0} \frac{\dot{\gamma_s}(t)}{|\dot{\gamma_s}(t)|_\godd} \neq \lim_{\epsilon \to 0} \frac{\dot{\gamma_{s'}}(t)}{|\dot{\gamma_{s'}}(t)|_\godd},
 $$
 which would yield what we want.
 
 \end{itemize}

 \begin{center}
 \begin{tabular}{ccc}
 
\begin{tikzpicture}[domain=-2:2]
   
  \foreach \eps in {0.15, 0.22, ..., 2}
   \draw[opacity=0.5, color=blue, domain=-2:0,  smooth,variable=\x]  plot(\x,\eps*\x*\x*0.1+\eps);
   \foreach \eps in {0, 0.03, ..., 0.18}
   \draw[color=black, domain=-2:0,  smooth,variable=\x]    plot(\x,\eps*\x*\x*0.1+\eps);
   \foreach \eps in {0.03, 0.06, ..., 0.18}
   \draw[color=black, domain=-2:0,  smooth,variable=\x]    plot(\x,-\eps*\x*\x*0.1-\eps);
   \foreach \eps in {0.15, 0.22, ..., 2}
   \draw[opacity=0.5, color=blue, domain=-2:0,  smooth,variable=\x]    plot(\x,-\eps*\x*\x*0.1-\eps);
   
   \foreach \eps in {0.15, 0.22, ..., 2}
   \draw[opacity=0.5, color=blue, domain=-2:0,  smooth,variable=\x]  plot(-\x,\eps*\x*\x*0.1+\eps);
   \foreach \eps in {0, 0.03, ..., 0.18}
   \draw[color=black, domain=-2:0,  smooth,variable=\x]    plot(-\x,\eps*\x*\x*0.1+\eps);
   \foreach \eps in {0.03, 0.06, ..., 0.18}
   \draw[color=black, domain=-2:0,  smooth,variable=\x]    plot(-\x,-\eps*\x*\x*0.1-\eps);
   \foreach \eps in {0.15, 0.22, ..., 2}
   \draw[opacity=0.5, color=blue, domain=-2:0,  smooth,variable=\x]    plot(-\x,-\eps*\x*\x*0.1-\eps);
   
  \draw[fill=white,draw=white] (0,2.8) -- (0,2) arc (90:270:2) -- (0,-2.8)--(-2.2,-2.8)--(-2.2,2.8) -- cycle;
  \draw[fill=white,draw=white] (0,2.8) -- (0,2) arc (90:-90:2) -- (0,-2.8)--(2.2,-2.8)--(2.2,2.8) -- cycle;
  
    \coordinate[label=90:$N$] (N) at (0,2.5);
  \coordinate(P) at (0,0);
  \coordinate[label=80:$P_{2\epsilon}$] (P') at (0.02,0.15);
\draw [line width=2pt,domain=-0.2:0.2] plot (0,\x);
\draw (0,-2.5) -- (0,2.5);
\draw (0,0) circle [radius=2cm];

\coordinate[label=-180:$S$] (S) at (-2,0);
\draw [line width=2pt,domain=173:187] plot ({2*cos(\x)}, {2*sin(\x)});

\coordinate[label=0:$R_{2\epsilon}$] (R) at (2,0);
\draw [line width=2pt,domain=-7:7] plot ({2*cos(\x)}, {2*sin(\x)});

    \coordinate[color=blue,label=-90:\textcolor{blue}{$X+2\epsilon$}] (X) at (-2,2);
  \end{tikzpicture}
 
      & 
      
      \begin{tikzpicture}[domain=-2:2]

    \foreach \eps in {0.05, 0.1, ..., 2}
   \draw[opacity=0.5, color=blue, domain=-2+\eps-0.3:0,  smooth,variable=\x]  plot(\x,\eps*\x*\x*0.4+\eps);
   \foreach \eps in {0, 0.02, ..., 0.1}
   \draw[color=black, domain=-2+\eps-0.1:0,  smooth,variable=\x]    plot(\x,\eps*\x*\x*0.4+\eps);
   \foreach \eps in {0.02, 0.04, ..., 0.08}
   \draw[color=black, domain=-2+\eps-0.1:0,  smooth,variable=\x]    plot(\x,-\eps*\x*\x*0.4-\eps);
   \foreach \eps in {0.05, 0.1, ..., 2}
   \draw[opacity=0.5, color=blue, domain=-2+\eps-0.3:0,  smooth,variable=\x]    plot(\x,-\eps*\x*\x*0.4-\eps);
   
    \foreach \eps in {0.05, 0.1, ..., 2}
   \draw[opacity=0.5, color=blue, domain=-2+\eps-0.3:0,  smooth,variable=\x]  plot(-\x,\eps*\x*\x*0.4+\eps);
   \foreach \eps in {0, 0.02, ..., 0.1}
   \draw[color=black, domain=-2+\eps-0.1:0,  smooth,variable=\x]    plot(-\x,\eps*\x*\x*0.4+\eps);
   \foreach \eps in {0.02, 0.04, ..., 0.08}
   \draw[color=black, domain=-2+\eps-0.1:0,  smooth,variable=\x]    plot(-\x,-\eps*\x*\x*0.4-\eps);
   \foreach \eps in {0.05, 0.1, ..., 2}
   \draw[opacity=0.5, color=blue, domain=-2+\eps-0.3:0,  smooth,variable=\x]    plot(-\x,-\eps*\x*\x*0.4-\eps);
   
    \draw[fill=white,draw=white] (0,2.8) -- (0,2) arc (90:270:2) -- (0,-2.8)--(-2.2,-2.8)--(-2.2,2.8) -- cycle;
  \draw[fill=white,draw=white] (0,2.8) -- (0,2) arc (90:-90:2) -- (0,-2.8)--(2.2,-2.8)--(2.2,2.8) -- cycle;
  
    \coordinate[label=90:$N$] (N) at (0,2.5);
  \coordinate(P) at (0,0);
  \coordinate[label=80:$P_{\epsilon}$] (P') at (0.05,0.1);
\draw [line width=2pt,domain=-0.12:0.12] plot (0,\x);
\draw (0,-2.5) -- (0,2.5);
\draw (0,0) circle [radius=2cm];

\coordinate[label=-180:$S$] (S) at (-2,0);
\draw [line width=2pt,domain=173:187] plot ({2*cos(\x)}, {2*sin(\x)});

\coordinate[label=0:$R_{\epsilon}$] (R) at (2,0);
\draw [line width=2pt,domain=-7:7] plot ({2*cos(\x)}, {2*sin(\x)});

    \coordinate[color=blue,label=-90:\textcolor{blue}{$X+\epsilon$}] (X) at (-2,2);
  \end{tikzpicture}
      &
 \begin{tikzpicture}[domain=-2:2]
   
  \foreach \eps in {0.05, 0.1, ..., 2}
   \draw[opacity=0.5, color=blue, domain=-2+\eps-0.1:0,  smooth,variable=\x]  plot(\x,\eps*\x*\x+\eps);
   \foreach \eps in {0, 0.01, ..., 0.04}
   \draw[color=black, domain=-2+\eps-0.1:0,  smooth,variable=\x]    plot(\x,\eps*\x*\x+\eps);
   \foreach \eps in {0.01, 0.02, ..., 0.05}
   \draw[color=black, domain=-2+\eps-0.1:0,  smooth,variable=\x]    plot(\x,-\eps*\x*\x-\eps);
   \foreach \eps in {0.05, 0.1, ..., 2}
   \draw[opacity=0.5, color=blue, domain=-2+\eps-0.1:0,  smooth,variable=\x]    plot(\x,-\eps*\x*\x-\eps);
   
    \foreach \eps in {0.05, 0.1, ..., 2}
   \draw[opacity=0.5, color=blue, domain=-2+\eps-0.1:0,  smooth,variable=\x]  plot(-\x,\eps*\x*\x+\eps);
   \foreach \eps in {0, 0.01, ..., 0.04}
   \draw[color=black, domain=-2+\eps-0.1:0,  smooth,variable=\x]    plot(-\x,\eps*\x*\x+\eps);
   \foreach \eps in {0.01, 0.02, ..., 0.05}
   \draw[color=black, domain=-2+\eps-0.1:0,  smooth,variable=\x]    plot(-\x,-\eps*\x*\x-\eps);
   \foreach \eps in {0.05, 0.1, ..., 2}
   \draw[opacity=0.5, color=blue, domain=-2+\eps-0.1:0,  smooth,variable=\x]    plot(-\x,-\eps*\x*\x-\eps);
   
    \draw[fill=white,draw=white] (0,2.8) -- (0,2) arc (90:270:2) -- (0,-2.8)--(-2.2,-2.8)--(-2.2,2.8) -- cycle;
  \draw[fill=white,draw=white] (0,2.8) -- (0,2) arc (90:-90:2) -- (0,-2.8)--(2.2,-2.8)--(2.2,2.8) -- cycle;
  
    \coordinate[label=90:$N$] (X) at (0,2.5);
  \coordinate(P) at (0,0);
  \coordinate[label=80:$p$] (P') at (0.1,0);
\fill (P) circle (3pt);
\draw (0,-2.5) -- (0,2.5);
\draw (0,0) circle [radius=2cm];

\coordinate[label=-180:$S$] (S) at (-2,0);
\draw [line width=2pt,domain=174:186] plot ({2*cos(\x)}, {2*sin(\x)});

\coordinate[label=0:$R_0$] (R) at (2,0);
\draw [line width=2pt,domain=-6:6] plot ({2*cos(\x)}, {2*sin(\x)});

    \coordinate[color=blue,label=-90:\textcolor{blue}{$X$}] (X) at (-2,2);
  \end{tikzpicture}
 
 \end{tabular}

      \end{center}

\end{proof}

\begin{remark}
{\em
We expect that there will be an ODD version of the \emph{Fundamental Theorem on Flows}  (c.f.~\cite[Thm.~9.12]{Lee13}) and that on a compact ODD Riemannian manifold, every ODD vector field is complete, i.e. all maximal integral curves are defined on the whole of $\RR$ (cf.~\cite[Le.~9.15,~Thm.~9.16]{Lee13}).
}
\end{remark}

\begin{remark}
{\em
We also remark that when $\godd$ is flat in the sense that there is an orthonormal coordinate frame around every point, then Theorem~\ref{thm:integrability-odd-vf} and Conjecture~\ref{conj:int-vf} already yield the existence of ODD geodesics. 
}
\end{remark}

\subsection{ODD Geodesics}

By the existence of a covariant derivative $D_t$, at least we can define geodesics as in the classical case.

\begin{definition}
Let $\gamma \colon I \to M$ be an ODD regular curve. We say that $\gamma$ is an \emph{ODD geodesic}, if $D_t \dot{\gamma} \equiv 0$.
\end{definition}

The \emph{existence} of ODD geodesics, compared to the classical case (see, e.g.,~\cite[Thm.~4.27]{Lee18}), is a harder task, as in the case of integral curves of vector fields. First, for $p \in M$, we should ask ourselves what an appropriate ODD tangential direction for a geodesic would be? By the definition of ODD vector fields (Def.~\ref{def:ODD-vectorfield}) and ODD vector fields along curves (Def.~\ref{def:ODD-vf-along-curve}), it should a priori be defined as follows:

\begin{definition}
Let $p \in (M,\godd)$, an ODD Riemannian manifold. Let   $(E_i)_i$ be a local orthonormal frame around $p$. An \emph{ODD tangential direction at $p$} is a formal sum
$$
\vodd:=\vodd^i \left. E_i \right|_{p},
$$
with $\vodd^i \in \RR$ for $1\leq i \leq n$.
\end{definition}

\begin{remark}
When $p$ lies in the degeneracy locus of $\godd$, then an ODD tangential direction may \emph{not} be a classical tangent vector at $p$. However, we can always assume to be in the following setting: let $(x^i)$ be a coordinate neighbourhood of $p$ and $(\del_i)$ be the associated coordinate frame. Let $(E_i)$ be the orthonormal frame obtained from $(\del_i)$ by the Gram-Schmidt process. Then expanding $E_i$ in the $\del_i$ yields $E_i=E_i^j \del_j$ with algebroid functions $E_i^j$.
This means we can formally write
$$
\vodd=\vodd^i \left. E_i \right|_{p} =\vodd^i E_i^j(p) \left. \del_j \right|_{p}.
$$
However, as we have seen in Example~\ref{ex:nonuniqueintegralcurves}, an ODD tangential direction will not provide a unique classical tangential direction, so we cannot expect geodesics in special points to be unique.
\end{remark}

When we try to prove the existence of ODD geodesics with this definition and analogous to the classical case (c.f.~\cite[Thm.~4.27]{Lee18}, we encounter a second order system of differential equations not only with an algebroid right hand side, but also with \emph{algebroid initial conditions} given by $\mathfrak{v}$. It is not clear (at least to us) how to tackle such a system. 

However, since by definition ODD geodesics are ODD regular curves, we know they have ODD reparametrizations which are classical regular curves due to Proposition~\ref{prop:ODDcurvesareregular}. So modulo reparametrization, it suffices to find geodesics for classical tangential directions. As for vector fields, we only give the definitive statement for general points of maximal $N_j$, while we formulate it as a conjecture for more special points.

\begin{theorem}[Existence of ODD Geodesics at general points of maximal components of $\mathcal{D}$]
\label{thm:existence-geodesics}
Let $(M,\godd)$ be an ODD Riemannian manifold and $p \in M$. Let $v \in T_pM$ be a tangent vector at $p$. If either
\begin{enumerate}
    \item $p$ is a general point of $M$, or
    \item $p$ is a general point of some $N_j \subseteq \mathcal{D}$, which is maximal in the sense that it is not contained in any other $N_k$,
\end{enumerate}
then, there exists a unique ODD geodesic $\gamma$ through $p$ in direction $v$.
\end{theorem}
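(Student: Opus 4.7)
The plan is to mirror the argument of Theorem~\ref{thm:integrability-odd-vf}, now with the geodesic ODE viewed as a first-order system on the tangent bundle $TM$. Item~(1) is immediate, since near a general point $\godd$ is a classical Riemannian metric and the usual ODE theory (e.g.~\cite[Thm.~4.27]{Lee18}) applies. For Item~(2) I would fix slice coordinates $(x^1,\ldots,x^n)$ centred at $p = 0$ in which $N_j = \{x^1 = \cdots = x^r = 0\}$ is the only component of $\mathcal{D}$ through $p$; such coordinates exist by maximality of $N_j$ and generality of $p$ in $N_j$. In them the ODD Christoffel symbols $\Gamma^k_{ij}$ are real-meromorphic with poles supported on $N_j$, so one may choose a positive real-analytic function $h$ vanishing exactly on $N_j$ (to the minimal order needed) such that every $h\,\Gamma^k_{ij}$ is real-analytic in the chart.

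Writing the geodesic equation $\ddot x^k + \Gamma^k_{ij}\dot x^i\dot x^j = 0$ as the first-order system
$$\dot{x}^k = w^k, \qquad \dot{w}^k = -\Gamma^k_{ij}(x)\, w^iw^j$$
on $TM$ with initial datum $(x(0), w(0)) = (0, v)$, I multiply both equations by $h(x)$ to obtain the analytic modified system
$$\dot{y}^k = h(y)\, z^k, \qquad \dot{z}^k = -h(y)\,\Gamma^k_{ij}(y)\, z^iz^j,$$
to which Picard--Lindelöf applies, giving a unique analytic solution $(y(s), z(s))$ near $s = 0$ with $(y(0), z(0)) = (0, v)$. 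Geometrically this is a reparametrization with $dt/ds = h(y)$. Setting $f(s) := h(y(s))$ and inverting $t(s) = \int_0^s f(\sigma)\,d\sigma$ to a continuous, strictly monotone algebroid $s(t)$ (with $\dot s(0) = +\infty$, since $f(0) = 0$), the candidate curve is $\gamma(t) := y(s(t))$; the chain rule gives $\dot\gamma(t) = z(s(t))$ so $\dot\gamma(0) = v$, and undoing the reparametrization in the modified system shows $D_t\dot\gamma \equiv 0$, i.e.\ $\gamma$ is an ODD geodesic.

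The main obstacle---and the only real departure from the proof of Theorem~\ref{thm:integrability-odd-vf}---is that $(0, v)$ is in general a critical point of the modified vector field: one always has $\dot y(0) = h(0)v = 0$, and it can happen that $\dot z(0) = -(h\,\Gamma^k_{ij})(0)\, v^iv^j = 0$ as well, in which case Picard--Lindelöf returns only the constant solution, $f \equiv 0$, and the reparametrization collapses. Ruling this out is the heart of the argument and should proceed by a dichotomy parallel to the one for ODD vector fields: if $v \notin T_pN_j$, then a constant solution would force $\gamma$ to remain at $p$, incompatible with a curve emanating from $p$ in the transverse direction $v$ (and, taking $h$ of minimal order and tracking the leading-order behaviour of $(y,z)$, the modified system is shown to be non-degenerate in this case); if instead $v \in T_pN_j$, the sought geodesic must lie in $N_j$, and we reduce by induction on $\dim M$ to the existence of an ODD geodesic on $(N_j, \godd^{N_j})$, which is a lower-dimensional ODD Riemannian manifold by Condition~(3) of Definition~\ref{def:ODD-metric}. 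Uniqueness is then inherited from the Picard--Lindelöf uniqueness for the modified system applied to two putative ODD geodesics after a common reparametrization by $s$.
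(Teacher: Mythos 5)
Your overall route---rewriting the geodesic equation as a first-order system, clearing the poles of the $\Gamma^k_{ij}$ by an analytic factor $h$, applying Picard--Lindel\"of to the modified system, and then undoing the time change---is exactly the strategy of the paper, which derives the same first-order system and says to proceed as in Theorem~\ref{thm:integrability-odd-vf}. The gap lies in the step you yourself call the heart of the argument. Your claim that for $v \notin T_pN_j$ the modified system with initial datum $(0,v)$ is non-degenerate and produces the geodesic fails already on the paper's running example $(M,\godd)=(\RR,\,x^2\,\mathrm{dx}^{\odot 2})$ with $p=0$: there $\Gamma^1_{11}=1/x$, one takes $h(x)=x$, and the modified system is $\dot y = yz$, $\dot z = -z^2$. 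The unique solution with $(y,z)(0)=(0,v)$ has $y\equiv 0$, even though $(0,v)$ is not a critical point and $v$ is transverse to $N_1=\{0\}$; hence $f=h\circ y\equiv 0$ and your reparametrization $t(s)=\int_0^s f(\sigma)\,d\sigma$ collapses. The genuine ODD geodesic through $0$ (a rescaling of $t\mapsto \mathrm{sgn}(t)\sqrt{|t|}$ from Example~\ref{ex:RegularCurveODDvsClass}) corresponds to the trajectory $y(s)=s/2$, $z(s)=1/s$ of the modified system, whose $z$-component blows up as it meets $\{h=0\}$; correspondingly, in this example every solution of the geodesic equation with $x(0)=0$ and \emph{finite} $\dot x(0)$ is constant, since $x\dot x$ is conserved. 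So prescribing the classical velocity $z(0)=v$ is the wrong initial datum for the transverse case: one must instead take as unknowns the velocity components with respect to an ODD orthonormal frame (which stay analytic along an ODD geodesic, as in the definition of ODD vector fields along curves), or interpret ``direction $v$'' asymptotically and select the trajectory by a limiting argument. Your leading-order remark does not substitute for this, and your uniqueness argument inherits the same defect, since the true geodesic is not the Picard--Lindel\"of solution through $(0,v)$.

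The second half of your dichotomy is also unjustified. For $v\in T_pN_j$ you assert that the sought geodesic must lie in $N_j$ and can be obtained by induction as an ODD geodesic of $(N_j,\godd^{N_j})$. Neither statement is automatic: a geodesic of the induced metric is a geodesic of $(M,\godd)$ only if the normal components of the connection vanish along $N_j$ (a totally-geodesic-type condition you would have to extract from the structure of an ODD metric), and it is not shown that an ambient geodesic with initial direction in $T_pN_j$ cannot leave $N_j$. The paper's proof does not use such a dichotomy; it reduces directly to the mechanism of Theorem~\ref{thm:integrability-odd-vf}, and the work to be supplied is precisely the correct treatment of the initial condition at the degenerate point, which your proposal in its present form gets wrong.
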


\begin{proof}
Let $(x^i)_i$ be a coordinate neighbourhood centered at $p$ with corresponding coordinate frame $(\del_i)_i$. We can assume that the ODD orthormal frame $(E_i)_i$ arises from $(\del_i)_i$ by the Gram-Schmidt process and that $\vodd= \vodd^i \left. E_i\right|_{p}$ for some $\vodd^i \in \RR$.

Let $\gamma$ be an ODD regular curve through $p$ with coefficient functions $x_i(t)$. We let $\Gamma_{ij}^k$ be the Christoffel symbols in the frame $(\del_i)_i$. Then we compute
\begin{align*}
    D_t \dot{\gamma}(t) &= \ddot{x}^i(t) \left.\del_i\right|_{\gamma(t)} + \dot{x}^j(t) \nabla_{\dot{\gamma}(t)} \left. \del_j \right|_{\gamma(t)} \\
    &= \ddot{x}^i(t) \left.\del_i\right|_{\gamma(t)} + \dot{x}^j(t)\dot{x}^k(t) \nabla_{\del_k} \left. \del_j \right|_{\gamma(t)} \\
    &= \ddot{x}^i(t) + \dot{x}^j(t)\dot{x}^k(t) \Gamma_{kj}^i(\gamma(t)) \left.\del_i\right|_{\gamma(t)}.
\end{align*}

So $\gamma$ is an ODD geodesic through $p$ in direction $v$ if and only if the \emph{geodesic equations} $\ddot{x}^i(t) + \dot{x}^j(t)\dot{x}^k(t) \Gamma_{kj}^i(\gamma(t))=0$ together with the initial conditions are satisfied for all $1 \leq i \leq n$. We convert this second order system to a first order system as usual by introducing new variables $v^i(t)$ and imposing
\begin{align*}
    \dot{x}^i(t) &= v^i(t), \\
    \dot{v}^i(t) &= - v^j(t)v^k(t) \Gamma_{kj}^i(x^1(t),\ldots,x^n(t)),
\end{align*}
together with the initial conditions
\begin{align*}
    x^i(0) &= 0, \\
    v^i(0) &= v^i.
\end{align*}
Now we can proceed as in the proof of Theorem~\ref{thm:integrability-odd-vf} by multiplying this system appropriately.
\end{proof}

The general case should read as follows:

\begin{conjecture}[Existence of ODD Geodesics]
\label{conj:existence-geodesics}
Let $(M,\godd)$ be an ODD Riemannian manifold and $p \in M$ with an ODD orthonormal frame $(E_i)_i$ around $p$. Let $v \in T_pM$ be a tangent vector at $p$. Then, there exists a unique ODD geodesic $\gamma$ through $p$ in direction $v$.
\end{conjecture}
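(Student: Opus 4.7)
My plan is to reduce the geodesic equation to a first-order ODE system on the tangent bundle, clear denominators so the right-hand side becomes analytic, and then run the perturbation scheme sketched for Conjecture~\ref{conj:int-vf}.

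I first work in a coordinate chart $(x^i)$ centered at $p$ in which $\mathcal{D}$ has simple normal crossings, take the ODD orthonormal frame $(E_i)_i$ obtained from $(\partial_i)_i$ by Gram--Schmidt, and (as in the proof of Theorem~\ref{thm:existence-geodesics}) convert the geodesic equation with initial data $\dot{\gamma}(0)=v$ to the first-order system
\begin{align*}
\dot{x}^i(t) &= v^i(t),\\
\dot{v}^i(t) &= -v^j(t)\,v^k(t)\,\Gamma^i_{kj}(x(t)),
\end{align*}
with $x(0)=0$ and $v(0)=v$. The right-hand side defines an algebroid vector field $Z$ on $TU$ whose singular locus lies above $\mathcal{D}$. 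Multiplying both equations by a suitable analytic function $h(x)$ vanishing to sufficiently high order on $\mathcal{D}$ to clear the poles of the $\Gamma^i_{kj}$ yields an analytic vector field $\tilde Z = h\cdot Z$ on $TU$. Integral curves of $\tilde Z$ are reparametrizations of integral curves of $Z$, and hence project to ODD geodesics of $\godd$. The problem thus reduces to producing an integral curve of $\tilde Z$ whose closure contains $(0,v)$ and which reparametrizes to an ODD regular curve through $p$ with initial direction $v$.

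Next I would apply the perturbation argument sketched for Conjecture~\ref{conj:int-vf} to $\tilde Z$ at the base point $(0,v)\in TU$. A useful feature of the geodesic setting is that $v\in T_pM$ is a genuine classical tangent vector, not an algebroid ODD direction, so no algebroid datum appears in the fiber coordinate; only the base-coordinate flow is reparametrized. I would then induct on $\codim_p \mathcal{D}$, with Theorem~\ref{thm:existence-geodesics} as the base case, to assume well-defined integral curves of $\tilde Z$ on a punctured neighbourhood of $(0,v)$ filling a transversal slice $N$ through $(0,v)$. A continuity/index-counting argument on the return map to $N$, combined with perturbations $\tilde Z + \epsilon W$ for a suitably chosen analytic $W$ and $\epsilon\searrow 0$, should produce a limit curve with $(0,v)$ in its closure.

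The main obstacle, the same as for Conjecture~\ref{conj:int-vf}, is to show rigorously that this limit curve reparametrizes to an ODD regular curve in the sense of Definition~\ref{def:ODD-regular-curve}(2) with the prescribed initial direction $v$, and that the correspondence ``initial direction $\leftrightarrow$ geodesic'' is bijective. I expect both to follow from a resolution of singularities of $\tilde Z$ adapted to the simple normal crossing structure of $\mathcal{D}$: after a sequence of blow-ups centered on strata of $\mathcal{D}$, one should obtain a lift of $\tilde Z$ that is analytic with at worst index-one zeros along the exceptional divisors, to which classical ODE theory applies; pushing the resulting trajectories back down should deliver both existence and the claimed uniqueness. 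Making this rigorous in full generality is the technical heart of the problem and coincides with the open step in the sketch of Conjecture~\ref{conj:int-vf}.
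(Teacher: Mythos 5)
Be aware of what you are comparing against: the paper itself does not prove this statement. It is stated as a conjecture, accompanied only by the remark that it should be provable ``along the same lines as Conjecture~\ref{conj:int-vf}'', whose own justification in the paper is merely a sketch with an explicitly open core. Your plan follows exactly that envisioned route: pass to the first-order geodesic system on $TU$ as in the proof of Theorem~\ref{thm:existence-geodesics}, clear the poles of the Christoffel symbols by multiplying with an analytic $h$ vanishing on $\mathcal{D}$ (which only reparametrizes trajectories), and then run the perturbation/continuity scheme from the sketch of Conjecture~\ref{conj:int-vf} to produce a trajectory with $(0,v)$ in its closure. In that sense your proposal is faithful to the paper's intention, and your additional suggestion --- resolving the singularities of $\tilde Z$ by blow-ups along strata of $\mathcal{D}$ so that classical ODE theory applies upstairs --- is a genuinely new technical ingredient not present in the paper, though you do not carry it out either.

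As a proof, however, the proposal has the same gap that makes the statement a conjecture rather than a theorem, and you say so yourself: nothing in the plan establishes (a) that a limit trajectory of $\tilde Z$ with $(0,v)$ in its closure actually exists, (b) that it reparametrizes to an ODD regular curve in the sense of Definition~\ref{def:ODD-regular-curve}(2) with initial direction $v$, or (c) uniqueness, i.e.\ that the correspondence between classical directions $v\in T_pM$ and geodesics through $p$ is one-to-one (recall Example~\ref{ex:nonuniqueintegralcurves} shows that naive uniqueness of integral curves fails, so (c) is delicate). Moreover, even the reduction to the scheme of Conjecture~\ref{conj:int-vf} is not automatic: that sketch is formulated for an ODD vector field, i.e.\ one with analytic, nonvanishing coefficients in an ODD orthonormal frame, and it leans on the singularity of the modified system ``arising from a singularity-free field in the ODD orthonormal frame'' (index-zero singularity). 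The geodesic field $Z$ on $TU$ is quadratic in the fibre variable and built from the Christoffel symbols, and you do not check that $Z$, or $\tilde Z=hZ$ near $(0,v)$, has this structure with respect to any natural frame on $TU$ (for instance whether $Z$ is an ODD field for a suitable ODD metric on $TU$), so the perturbation argument cannot simply be quoted; it would have to be re-proved in the tangent-bundle setting. These are precisely the open steps, so your text should be presented as a strategy consistent with the paper's expectation, not as a proof.
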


We expect that this could be proven along the same lines as Conjecture~\ref{conj:int-vf}.

\bibliographystyle{habbrv}
\bibliography{bib}

\begin{thebibliography}{10}
\expandafter\ifx\csname url\endcsname\relax
  \def\url#1{\texttt{#1}}\fi
\expandafter\ifx\csname doi\endcsname\relax
  \def\doi#1{\burlalt{doi:#1}{http://dx.doi.org/#1}}\fi
\expandafter\ifx\csname urlprefix\endcsname\relax\def\urlprefix{URL }\fi
\expandafter\ifx\csname href\endcsname\relax
  \def\href#1#2{#2}\fi
\expandafter\ifx\csname burlalt\endcsname\relax
  \def\burlalt#1#2{\href{#2}{#1}}\fi

\bibitem{KE1}
X.~Chen, S.~Donaldson, and S.~Sun.
\newblock K\"{a}hler-{E}instein metrics on {F}ano manifolds. {I}:
  {A}pproximation of metrics with cone singularities.
\newblock {\em J. Amer. Math. Soc.}, 28(1):183--197, 2015.
\newblock \doi{10.1090/S0894-0347-2014-00799-2}.

\bibitem{KE2}
X.~Chen, S.~Donaldson, and S.~Sun.
\newblock K\"{a}hler-{E}instein metrics on {F}ano manifolds. {II}: {L}imits
  with cone angle less than {$2\pi$}.
\newblock {\em J. Amer. Math. Soc.}, 28(1):199--234, 2015.
\newblock \doi{10.1090/S0894-0347-2014-00800-6}.

\bibitem{KE3}
X.~Chen, S.~Donaldson, and S.~Sun.
\newblock K\"{a}hler-{E}instein metrics on {F}ano manifolds. {III}: {L}imits as
  cone angle approaches {$2\pi$} and completion of the main proof.
\newblock {\em J. Amer. Math. Soc.}, 28(1):235--278, 2015.
\newblock \doi{10.1090/S0894-0347-2014-00801-8}.

\bibitem{DK81}
D.~M. DeTurck and J.~L. Kazdan.
\newblock Some regularity theorems in {R}iemannian geometry.
\newblock {\em Ann. Sci. \'{E}cole Norm. Sup. (4)}, 14(3):249--260, 1981.
\newblock
  \urlprefix\url{http://www.numdam.org/item?id=ASENS_1981_4_14_3_249_0}.

\bibitem{Er82}
A.~E. Er\"{e}menko.
\newblock Meromorphic solutions of algebraic differential equations.
\newblock {\em Uspekhi Mat. Nauk}, 37(4(226)):53--82, 1982. English
  translation: Russian Math. Surveys 37 (1982), no. 4, 61–95.

\bibitem{EGZ09}
P.~Eyssidieux, V.~Guedj, and A.~Zeriahi.
\newblock Singular {K}\"{a}hler-{E}instein metrics.
\newblock {\em J. Amer. Math. Soc.}, 22(3):607--639, 2009.
\newblock \doi{10.1090/S0894-0347-09-00629-8}.

\bibitem{HTT08}
R.~Hotta, K.~Takeuchi, and T.~Tanisaki.
\newblock {\em {$D$}-modules, perverse sheaves, and representation theory},
  volume 236 of {\em Progress in Mathematics}.
\newblock Birkh\"{a}user Boston, Inc., Boston, MA, 2008.
\newblock \doi{10.1007/978-0-8176-4523-6}.
\newblock Translated from the 1995 Japanese edition by Takeuchi.

\bibitem{Lee13}
J.~M. Lee.
\newblock {\em Introduction to smooth manifolds}, volume 218 of {\em Graduate
  Texts in Mathematics}.
\newblock Springer, New York, second edition, 2013.

\bibitem{Lee18}
J.~M. Lee.
\newblock {\em Introduction to {R}iemannian manifolds}, volume 176 of {\em
  Graduate Texts in Mathematics}.
\newblock Springer, Cham, 2018.
\newblock Second edition.

\bibitem{LXZ22}
Y.~Liu, C.~Xu, and Z.~Zhuang.
\newblock Finite generation for valuations computing stability thresholds and
  applications to {K}-stability.
\newblock {\em Ann. of Math. (2)}, 196(2):507--566, 2022.
\newblock \doi{10.4007/annals.2022.196.2.2}.

\bibitem{RT11}
J.~Ross and R.~Thomas.
\newblock Weighted projective embeddings, stability of orbifolds, and constant
  scalar curvature {K}\"{a}hler metrics.
\newblock {\em J. Differential Geom.}, 88(1):109--159, 2011.
\newblock \urlprefix\url{http://projecteuclid.org/euclid.jdg/1317758871}.

\bibitem{Str86}
R.~S. Strichartz.
\newblock Sub-{R}iemannian geometry.
\newblock {\em J. Differential Geom.}, 24(2):221--263, 1986.
\newblock \urlprefix\url{http://projecteuclid.org/euclid.jdg/1214440436}.

\end{thebibliography}

\end{document}